\documentclass[12pt,reqno,twoside]{amsart}

\usepackage[english]{babel}
\usepackage{stmaryrd}
\usepackage{dsfont}
\usepackage[symbol*,ragged]{footmisc}
\usepackage[colorlinks,linkcolor=blue,anchorcolor=blue,citecolor=blue,urlcolor=blue]{hyperref}
\usepackage{color,xcolor}

\usepackage{geometry}
\usepackage{fancyhdr}
\usepackage{amssymb}
\usepackage{amsmath}
\usepackage{mathrsfs}
\usepackage{amsfonts}
\usepackage{epsfig}

\usepackage{amsthm}
\usepackage{amsxtra}
\usepackage{bbding}
\usepackage{epsfig}
\usepackage{graphicx}
\usepackage{latexsym}
\usepackage{mathbbol}
\usepackage{bbold}

\usepackage{pifont}
\usepackage{wasysym}
\usepackage{skull}
\usepackage{float}

\DeclareSymbolFontAlphabet{\mathbb}{AMSb}
\DeclareSymbolFontAlphabet{\mathbbol}{bbold}

\usepackage{amscd}
\usepackage[all]{xy}
\allowdisplaybreaks[4]
\usepackage{setspace}

\theoremstyle{plain}
\newtheorem{theorem}{\normalfont\scshape Theorem}[section]
\newtheorem{corollary}{\normalfont\scshape Corollary}[section]
\newtheorem{proposition}{\normalfont\scshape Proposition}[section]
\newtheorem{lemma}[proposition]{\normalfont\scshape Lemma}
\newtheorem*{corollary*}{\normalfont\scshape Corollary}
\newtheorem{remark}{\normalfont\scshape Remark}

\theoremstyle{remark}
\newtheorem*{remark*}{\normalfont\scshape Remark}

\newcommand{\Kl}{\mathrm{Kl}}
\let\ve=\varepsilon

\newcommand{\mods}[1]{\,(\mathrm{mod}\;{#1})}
\newcommand{\sumstar}{\sideset{}{^*}\sum}

\numberwithin{equation}{section}
\addtocounter{footnote}{1}

\renewcommand{\footnoterule}{
	\kern -3pt
	\hrule width 2.5in height 0.4pt
	\kern 3pt}

\makeatletter
\@ifundefined{MakeUppercase}{}{}
\makeatother

\begin{document}
\title[On the exponent of distribution for convolutions of $\operatorname{GL}(2)$ coefficients to smooth moduli]
{On the exponent of distribution for convolutions of $\operatorname{GL}(2)$ coefficients to smooth moduli}
	
\author[Rongjie Yin]{Rongjie Yin}
\address{Data science institute, Shandong University, Jinan 250100, People's Republic of China}
\email{rongjie.yin@mail.sdu.edu.cn}

\author[Tengyou Zhu]{Tengyou Zhu}
\address{School of Mathematics, Shandong University, Jinan 250100, People's Republic of China}
\email{zhuty@mail.sdu.edu.cn}

\subjclass[2020]{11N37, 11B25, 11N25}
\keywords{Bilinear forms, arithmetic progression, Kloosterman sums}

\begin{abstract}
Let $(\lambda_f(n))_{n\geqslant1}$ be the Hecke eigenvalues of a holomorphic cusp form $f$. We prove that the exponent of distribution of $\lambda_f*1$ in arithmetic progressions is as large as $\frac{1}{2}+\frac{1}{70}$ when the modulus $q$ is square-free and has only sufficiently small prime factors.
\end{abstract}

\maketitle

\section{Introduction and the main result}
Given an arithmetical function $f:\mathbb{N}\to \mathbb{C}$, we expect that
\begin{equation}\label{first}
\sum_{\substack{n\leqslant X\\n\equiv a\mods{q}}}f(n)\sim \frac{1}{\varphi(q)}\sum_{\substack{n\leqslant X\\(n,q)=1}}f(n),
\end{equation}
for each $(a,q)=1$. It is an interesting problem in number theory to show that the above asymptotic holds for $q$ as large as possible. To this end, we call a positive number $\theta$ a distribution exponent for $f$ restricted to a set $\mathcal{Q}$ of moduli, if for any $q\in\mathcal{Q}$ with $q\leqslant X^{\theta-\varepsilon}$ and for each residue class 
$a$ modulo 
$q$ with $(a,q)=1$, the asymptotic formula
\begin{equation}\notag
\sum_{\substack{n\leqslant X\\n\equiv a\mods{q}}}f(n)-\frac{1}{\varphi(q)}\sum_{\substack{n\leqslant X\\ (n,q)=1}}f(n)\ll\frac{X}{q}(\log X)^{-A}
\end{equation} 
holds for any $A>0$ and $X\geqslant 2$.

For the very important von Mangoldt function $\Lambda(n)$, the classical Siegel-Walfisz theorem implies that the above asymptotic hold for $q\leqslant (\log X)^{B}$, where $B=B(A)>0$ depends on $A$, whereas the Generalized Riemann Hypothesis (GRH) predicts the formula holds for $q\leqslant X^{\frac{1}{2}-\varepsilon}$. The celebrated Bombieri-Vinogradov theorem confirms this prediction on average over the moduli.

Another important class of examples comes from the $k$-th divisor function
\begin{equation*}
d_k(n):=\sum_{\substack{n_1\cdots n_k=n\\n_1,\dots,n_k\in \mathbb{Z^{+}}}}1.
\end{equation*}
For each fixed $k\geqslant1$, we are interested in the admissible value of $\theta_k$ being as large as possible so that for any $\varepsilon>0$, the bound
\begin{equation*}
\sum_{\substack{n\leqslant X\\n\equiv a\mods{q}}}d_k(n)-\frac{1}{\varphi(q)}\sum_{\substack{n\leqslant X\\ (n,q)=1}}d_k(n)\ll\frac{X}{q}(\log X)^{-A}
\end{equation*}
holds for all $q\leqslant X^{\theta_k-\varepsilon}$ and $A>0$. It is widely believed that one can take $\theta_k=1$ for each $k\geqslant1$. Relying on Weil's bound for Kloosterman sums, Selberg \cite{Selberg} and Hooley \cite{Hooley} independently obtained that $\theta_2=\frac{2}{3}$ is admissible. For $k=3$, this problem attracted the attention of many mathematicians. At first, Friedlander and Iwaniec \cite{FI1985} proved that one can take $\theta_3=\frac{1}{2}+\frac{1}{230}$, and this was later improved to $\frac{1}{2}+\frac{1}{82}$ by Heath-Brown \cite{Heath-Brown}. In the case of prime moduli, Fouvry, Kowalski and Michel \cite{FKM2015} proved that
$\theta_3=\frac{1}{2}+\frac{1}{46}$ is admissible by appealing to their bilinear forms of Kloosterman sums with smooth coefficients (see \cite[Theorem 1.16]{FKM2014}). More tools are available to address the problem if conditions are imposed on the prime factorization of $q$. For instance, Irving \cite{Irving}, Xi \cite{Xi}, and Sharma \cite{Sharma} improved the results. So far, the best result is due to Sharma \cite{Sharma}, who obtained $\theta_3=\frac{1}{2}+\frac{1}{30}$.

Let $f$ be a holomorphic cusp form of level $1$ with Hecke eigenvalues $\lambda_f(n)$, normalized so that $|\lambda_f(n)|\leqslant d(n)$, where $d(n)$ is the classical divisor function. For $n\geqslant1$, we define
\begin{equation}\label{定义}
    (\lambda_f*1)(n) :=\mathop{\sum\sum}_{ml=n}\lambda_f(m).
\end{equation}
The exponent of the distribution of $\lambda_f*1$ is also an interesting problem. Kowalski, Michel and Sawin \cite{KMS2017} proved that the exponent is $\frac{1}{2}+\frac{1}{102}$ when $q$ ranges over prime moduli.

In this paper, we focus on estimating the exponent for $\lambda_f*1$ when $q$ ranges over smooth moduli. Our main result is the following Theorem \ref{main theorem}, which achieves a larger exponent than in the case of the prime moduli treated in \cite{KMS2017}.

\begin{theorem}\label{main theorem}
Let $q$ be a square-free integer and have only sufficiently small prime factors, and let $a\geqslant1$ be an integer such that $(a,q)=1$. Let $X\geqslant1$ satisfy
\begin{equation*}\label{q=X}
 q\leqslant X^{\frac{18}{35}-\varepsilon}=X^{\frac{1}{2}+\frac{1}{70}-\varepsilon}.
\end{equation*}
 Then we have
\begin{equation*}\label{主估计}
\sum_{\substack{n\leqslant X\\n\equiv a\mods{q}}}(\lambda_f*1)(n)-\frac{1}{\varphi(q)}\sum_{\substack{n\leqslant X\\(n,q)=1}} (\lambda_f*1)(n)\ll_{\varepsilon,A}\frac{X}{q}(\log X)^{-A},
\end{equation*}
 where $\varepsilon>0$ is arbitrarily small and $A>0$ is arbitrarily large.
\end{theorem}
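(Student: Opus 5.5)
We follow the standard scheme for divisor-type convolutions in arithmetic progressions. We insert smooth partitions of unity in both variables $m$ and $l$ of $(\lambda_f*1)(n)=\sum_{ml=n}\lambda_f(m)$. This reduces the statement to bounding, for dyadic $M,L$ with $ML\asymp X$, the sums
\[
S(M,L)=\sum_{ml\equiv a \mods q}\lambda_f(m)V(m/M)W(l/L)\;-\;\frac{1}{\varphi(q)}\sum_{(ml,q)=1}\lambda_f(m)V(m/M)W(l/L),
\]
with a saving of $X^{-\delta}$ over the trivial bound $X/q$. The boundary pieces lost by smoothing are handled by Deligne's bound $|\lambda_f(n)|\leqslant d(n)$ together with a Shiu-type bound in progressions.

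We then split into ranges.

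\emph{First range ($L$ large).} If $L\geqslant q^{1+\varepsilon}X^{-\eta}$, roughly $L\gg q$, we apply Poisson summation in $l$ modulo $q$. The zero frequency produces the main term. The nonzero frequencies give sums $\sum_m \lambda_f(m)e(a\bar m h/q)$, i.e.\ twists of $\lambda_f$ by additive characters. The Voronoi formula converts these into Kloosterman-type sums, and these are bounded using Weil/Deligne, or are small directly since $L\gg q$.

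\emph{Second range ($M$ large).} Symmetrically, if $M\gg q$, we use the $\mathrm{GL}(2)$ Voronoi summation formula for $\lambda_f$ in $m$ modulo $q$. The dual sum has length about $q^2/M$ and involves Kloosterman sums $S(a\bar l,h;q)$. For square-free $q$ the Kloosterman sum factors multiplicatively.

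\emph{Critical range ($M\approx L\approx X^{1/2}$, $q$ slightly larger than $X^{1/2}$).} This is the main obstacle. After Voronoi in $m$ (or Poisson in $l$), one is left with a bilinear form
\[
\sum_{h\sim H}\sum_{l\sim L}\lambda_f(h)\,\mathrm{Kl}_2(ah\bar l;q)
\]
with $HL$ only a little below $q$. Here the smoothness of $q$ enters. We factor $q=q_1q_2$ with $q_1$ of any prescribed size up to the size of the small prime factors, and use the $q$-van der Corput method, i.e.\ Weyl differencing/Cauchy–Schwarz along the factorization $q=q_1q_2$, in the style of Irving, Xi and Sharma for $d_3$. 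Concretely, we apply Cauchy–Schwarz in $h$ to remove $\lambda_f(h)$, using $\sum|\lambda_f(h)|^2\ll H$ by Rankin–Selberg. We then shift $l\mapsto l+q_1 t$, which produces a complete sum modulo $q_2$ of products of Kloosterman sums. Square-root cancellation for that sum follows from Deligne's Riemann Hypothesis (sums of products of shifted $\mathrm{Kl}_2$ have square-root cancellation for generic shifts, since the sheaves are geometrically irreducible and pairwise non-isomorphic). The residual sum modulo $q_1$ is completed by Poisson.

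Finally, we optimize the choice of $q_1$, which is possible because $q$ has only small prime factors, together with the range boundaries $M,L$ against the three estimates. Balancing the diagonal contribution, the Weil-bound contribution, and the van der Corput bound in the critical range should give the admissible exponent $q\leqslant X^{18/35-\varepsilon}$. The main difficulty is keeping the diagonal terms after Cauchy–Schwarz under control and checking that the balancing indeed closes at $\frac12+\frac1{70}$.
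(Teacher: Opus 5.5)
\textbf{There is a genuine gap: the bilinear form you reduce to in the critical range cannot be bounded by your method.}

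After Voronoi in $m$ alone, the dual variable has length $H\asymp q^2/M$. So for $M\asymp L\asymp X^{1/2}$ you get $HL\asymp q^2L/M\asymp q^2$, not ``a little below $q$''. Track the normalisation: $\frac1q$ from detecting the congruence, $\frac1q\check V(h/q^2)\asymp M/q$ from Voronoi, and $S(a,-h\bar l;q)=q^{1/2}\mathrm{Kl}_2$. The error term is then of size $\frac{M}{q^{3/2}}\,T$, where $T=\sum_{h\lesssim H}\sum_{l\sim L}\lambda_f(h)\mathrm{Kl}_2(ah\bar l;q)$. Hence you need $T\ll Lq^{1/2}$, up to $q^{\varepsilon}$ and powers of $\log X$.

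After Cauchy--Schwarz in $h$, the diagonal alone gives $HL^{1/2}$. This is $\leqslant Lq^{1/2}$ only if $q^{3/2}\leqslant ML^{1/2}=X^{3/4}$, i.e.\ $q\leqslant X^{1/2}$. Cauchy--Schwarz in $l$ instead has diagonal $LH^{1/2}$, which needs $H\leqslant q$, i.e.\ $M\geqslant q$. Both fail precisely when $q>X^{1/2}$. So no choice of $q_1$, no $q$-van der Corput shift, and no Deligne bound for the shifted products of $\mathrm{Kl}_2$ can close the estimate.

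The missing step is to complete the $l$-sum as well. Since $L<q$, Poisson in $l$ turns $\mathrm{Kl}_2(ah\bar l;q)$ into $\mathrm{Kl}_3$ with a dual variable of length $q/L$, and it gives the P\'olya--Vinogradov saving for free. This is the paper's Lemma~\ref{proposition2} (Voronoi in $m$ and Poisson in $l$ together). It reduces everything to $S(M',L')=\sum\lambda_f(m)\mathrm{Kl}_3(aml;q)V^*(m)W^*(l)$ with $M'\lesssim q^2/M$ and $L'\lesssim q/L$. Writing $X=q^{2-\delta}$, this gives $M'L'\lesssim q^{1+\delta+\eta}$, and one only needs $S(M',L')\ll q(\log X)^{-A}$, a saving of about $q^{\delta}$.

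\textbf{Even in that setting your plan lacks a Type~I input.} In the configuration you single out, $M'\asymp q^{1+\delta/2}$ and $L'\lesssim q^{\delta/2}$. A Cauchy--Schwarz that removes $\lambda_f$ then has diagonal $M'L'^{1/2}>q$; for the top dyadic piece it is $q^{1+3\delta/4}$. The paper instead treats each $l$ separately with Sharma's bound
$\sum_{m\leqslant M}\lambda_f(m)\mathrm{Kl}_3(am;q)\ll q^{\varepsilon}M(q^{-1/4}+q^{3/8}M^{-1/2}+q^{3/4}M^{-1})$ (Lemma~\ref{1_L}). There it saves $q^{1/8+\delta/4}\geqslant q^{\delta}$.

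Your first-range argument (Voronoi followed by pointwise Weil/Deligne bounds) cannot replace this. Such completion bounds for $\sum_{m\sim M}\lambda_f(m)e(ah\bar m/q)$ are nontrivial only when $M>q$. But for $q^{1-\delta}<L<q^{1+\varepsilon}$ one has $M=X/L<q$, and the nonzero frequencies do not disappear.

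\textbf{The exponent is asserted, not derived.} The value $\frac{18}{35}$, i.e.\ $\delta<\frac1{18}$, does not follow from anything you state. In the paper it comes from three explicit bounds for $S(M',L')$ and a case analysis in $(\mu',\nu')$. The three bounds are:
Corollary~\ref{main corollary}, the $q$-van der Corput estimate of Theorem~\ref{thmBKs} with a divisor $s\approx q^{1/3}$ of $q$;
the general bilinear bound of Lemma~\ref{exponent pairs};
and Lemma~\ref{1_L}.
Without bounds of this explicit shape there is nothing to balance.
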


The main novelty of Theorem \ref{main theorem} lies in establishing the exponent of distribution $\frac{1}{2}+\frac{1}{70}-\varepsilon$ for the convolution $\lambda_f*1$ over smooth moduli $q$, a result that significantly improves upon the exponent $\frac{1}{2}+\frac{1}{102}$ obtained by Kowalski, Michel and Sawin \cite{KMS2017} for prime moduli. By exploiting the favorable factorization structure of the modulus and employing Weyl differencing, we transform the original problem into estimating Kloosterman sums. 

Furthermore, through a delicate analysis of exponential sums over prime components (Lemma \ref{lem:S-bounds}), combined with the Poisson summation formula and the Voronoi summation formula, we obtain a nontrivial bound for the bilinear forms in Theorem \ref{thmBKs}. This approach not only circumvents the reliance on algebraic geometry methods but also provides a new technical pathway for handling a wider class of moduli and automorphic form coefficients.
\begin{remark}
There is an analytic analog to this question. Huang, Lin and Wang \cite{HLW} proved the asymptotic formula
\begin{equation*}
    \sum_{n\leqslant X}(\lambda_f*1)(n)=L(1,f)X+O(X^{\frac{1}{2}-\frac{4}{739}+\varepsilon}).
\end{equation*}
\end{remark}
$\textbf{Notation.}$ 
In this
paper, we denote $e(z) = e^{2i\pi z}$ for $z\in \mathbb{C}$. For $n \geqslant 1$ and for an integrable function $f : \mathbb{R}^n \to \mathbb{C}$, we denote by
\begin{equation*}
    \widehat{f}(\xi) = \int_{\mathbb{R}^n} f(t) e(-\langle t, \xi \rangle) \, \mathrm{d}t
\end{equation*}
its Fourier transform, where $\langle \cdot, \cdot \rangle$ is the standard inner product on $\mathbb{R}^n$.

If $q \geqslant 1$ is a positive integer and $K : \mathbb{Z} \longrightarrow \mathbb{C}$ is a periodic function with period $q$, its Fourier transform is the periodic function $\widehat{K}$ with period $q$ defined on $\mathbb{Z}$ by
\begin{equation*}
    \widehat{K}(n) = \frac{1}{\sqrt{q}} \sum_{h \mods q} K(h) e\left( \frac{hn}{q} \right).
\end{equation*}

Given a prime $p$ and a residue class $a$ modulo $p$ with $(a,p)=1$, we denote by $\bar{a}$ the inverse of $a$ modulo $p$.

\section{Bilinear forms with Kloosterman sums}
Let $q\geqslant1$ be an integer and let $K(n)$ be a complex valued arithmetic function that is $q$-periodic. A recurring problem in analytic number theory is to understand how such functions correlate with other natural arithmetic functions $f(n)$, where $f$ could
be the characteristic function of an interval, or that of the primes, or the Fourier coefficients of some automorphic forms. When facing such problems, one is often led to the problem of bounding non-trivially some bilinear forms
\begin{equation}\label{B()}
B(\alpha,\beta;K)=\sum_{m}\sum_{l}\alpha_m\beta_lK(ml),
\end{equation}
where the coefficients $\alpha=(\alpha_m)_{m\leqslant M}$ and $\beta=(\beta_l)_{l\in \mathcal{L}}$ are supported in intervals of length $M$ and $L$, respectively. We denote
\begin{equation*}
\|\alpha\|_2 := \Bigl{(}\sum_m |\alpha_m|^2\Bigl{)}^{1/2}
\end{equation*}
the $\ell^{2}$ norm.
One of the main objectives is to improve upon the trivial bound
\begin{equation*}
\|K\|_{\infty}\|\alpha\|_{2}\|\beta\|_{2}(ML)^{\frac{1}{2}}
\end{equation*}
for ranges of $M$ and $L$ that are as small compared to $q$ as possible. Indeed, this uniformity is often more important than the size of the saving over the trivial bound.

In particular, when dealing with problems related to the analytic theory
of automorphic forms, one is often faced with the case where $K(ml)$ is a hyper-Kloosterman sum $\mathrm{Kl}_k(n;q)$, which is defined, for $k\geqslant2$, by
\begin{equation*}
\mathrm{Kl}_k(n;q) :=\frac{1}{q^{\frac{k-1}{2}}}\mathop{\sum \cdots \sum}_{\substack{x_1, \dots, x_k \mods{q} \\ x_1 \dots x_k \equiv n \mods{q}}} e\Bigl{(}\frac{x_1 + \dots + x_k}{q}\Bigl{)}.
\end{equation*}
It is the classical Kloosterman sum when $k=2$,
and the celebrated Weil's bound gives
\begin{equation*}
    \sum_{x\in (\mathbb{Z}/q\mathbb{Z})^\times}e\Bigl{(}\frac{ax+bx^{-1}}{q}\Bigl{)}\ll \gcd(a,b,q)^{\frac{1}{2}}q^{\frac{1}{2}+o(1)}.
\end{equation*}

In our paper, for $K=\mathrm{Kl}_3(aml; q)$, when $q$ admits a favorable factorization, we establish the estimate for the bilinear form via the following theorem.

\begin{theorem}\label{thmBKs}
Let $M,N\geqslant1$, and let $s,q$ be squarefree numbers with $s\mid q$, let $a\in \mathbb{Z}$ be coprime to $q$, and let $\alpha_{m}$ and $\beta_l$ be defined as in \eqref{B()}, and they are complex numbers supported on $[M,2M]$ and $[L,2L]$ respectively, and $|\alpha_m|\leqslant1$, $|\beta_l|\leqslant1$.
Then we have
\begin{equation}\label{eqBKs}
\begin{aligned}
&\sum_{M \leqslant m \leqslant 2M} \sum_{L \leqslant l \leqslant 2L} \alpha_m \beta_l\mathrm{Kl}_3(a m l; q) \\
&\qquad \ll q^{\varepsilon} ML\bigl(M^{-\frac{1}{2}}s^{\frac{1}{2}} + q^{-\frac{1}{4}}s^{\frac{1}{4}} + L^{-\frac{1}{2}}q^{\frac{1}{4}}s^{-\frac{1}{4}}\bigr).
\end{aligned}
\end{equation}
\end{theorem}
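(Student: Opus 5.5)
Write $q=sr$ with $(s,r)=1$ (both squarefree since $q$ is). By the Chinese remainder theorem, $\mathrm{Kl}_3(aml;q)=\mathrm{Kl}_3(a\bar r^3 ml;s)\,\mathrm{Kl}_3(a\bar s^3 ml;r)$ up to the standard twisting of the argument. The plan is a Cauchy--Schwarz in $m$, followed by a ``$q$-van der Corput'' shift in $l$ by multiples of $s$, so that the $s$-part of the kernel becomes periodic and essentially disappears.

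\textbf{Step 1 (shift and Cauchy--Schwarz).} Fix a parameter $H\geqslant1$ with $Hs\leqslant L$. Replace $l$ by $l+hs$ for $1\leqslant h\leqslant H$ and average over $h$; the cost is $O(MHs)$, which after averaging and Cauchy--Schwarz is absorbed. Then apply Cauchy--Schwarz over $m$ (and over $l$), using $|\alpha_m|,|\beta_l|\leqslant1$. This reduces the problem to bounding
\[
\sum_{h_1,h_2\leqslant H}\sum_{l}\Bigl|\sum_{m\sim M}\mathrm{Kl}_3(a\bar s^3 m(l+h_1s);r)\overline{\mathrm{Kl}_3(a\bar s^3 m(l+h_2s);r)}\Bigr|.
\]
The $s$-factor depends only on $ml \bmod s$, so it is constant in $h$ and can be pulled out after the Cauchy step (this is where one places $l$ inside the inner sum). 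The cleanest arrangement is to put $l$ and $h$ inside and $m$ outside, getting a sum over $m\sim M$ of $|\sum_{l,h}\ldots|^2$.

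\textbf{Step 2 (completion).} Open the square and complete the sum over $m$ modulo $r$ by Poisson summation. This yields the correlation sums
\[
\mathcal{C}(\mathbf h,\mathbf l;\nu)=\sum_{x\bmod r}\prod_{i}\mathrm{Kl}_3(x(l_i+h_is))^{\pm}e\Bigl(\frac{\nu x}{r}\Bigr).
\]
Since $r$ is squarefree, these factor into prime components. This is exactly the role of Lemma~\ref{lem:S-bounds}: square-root cancellation $\ll p^{1/2}$ for generic tuples, and a trivial bound $\ll p$ on the diagonal, meaning tuples where the multiset $\{l_i+h_is\}$ pairs off modulo $p$ and $\nu\equiv0$. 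Multiplicativity then gives $\mathcal{C}\ll r^{1/2+\varepsilon}\prod_{p}(\text{gcd factors})$.

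\textbf{Step 3 (counting).} The zero frequency $\nu=0$ together with the diagonal contributes something like $ML\,(M^{-1/2}s^{1/2})$ after the square roots are taken, which is the first term. The generic terms contribute roughly $r^{1/2}$ against $ML$ normalizations, giving the term $q^{1/4}s^{-1/4}L^{-1/2}$. Optimizing $H$ subject to $Hs\leqslant L$ produces the middle term $q^{-1/4}s^{1/4}$. If $L<s$ (or $M<s$), the claimed bound exceeds the trivial bound $ML$, so we may assume the parameters are in range.

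\textbf{Main obstacle.} I expect the main obstacle to be Step 2/3. One has to control the degenerate tuples in the prime-component sums uniformly in $p$, including the gcd losses when $l_i+h_is$ coincide modulo various $p\mid r$, and sum these losses over divisors without losing more than $q^\varepsilon$. I would handle this by classifying, for each $d\mid r$, the tuples that are degenerate exactly modulo $d$, and counting them as $\ll HL/d$ plus diagonal.
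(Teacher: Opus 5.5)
Your strategy is the paper's: difference along multiples of $s$ so that the $s$-periodic factor $\mathrm{Kl}_3(a\bar r^3ml;s)$ drops out, complete the other variable modulo $r=q/s$, factor over $p\mid r$, and apply Lemma~\ref{lem:S-bounds}. The paper assigns the roles of the variables the other way round. It applies Cauchy--Schwarz with $l$ outside, then Lemma~\ref{lem:Weyl-differencing} in $m$ with $r_2=s$, $H=1$, $r_1=q/s$. This amounts to splitting $m$ into classes modulo $s$, which is what your shift with $H\asymp L/s$ does. It then applies Poisson in $l$ modulo $q/s$.

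This matters because your Step~3 does not match your own Steps~1--2. Start from your displayed reduction, with its prefactor $ML/H^2$:
\begin{itemize}
\item The diagonal $h_1=h_2$ contributes $\ll q^{\varepsilon}M^2L^2/H$ to $|B|^2$. With $H\asymp L/s$ this gives $(s/L)^{1/2}$, not $M^{-1/2}s^{1/2}$.
\item The zero frequency $\nu=0$ contributes $\ll q^{\varepsilon}M^2L^2r^{-1/2}$. This is where $q^{-1/4}s^{1/4}$ comes from; optimizing $H$ plays no role there.
\item The frequencies $0<|\nu|\ll r/M$ contribute $\ll q^{\varepsilon}ML^2r^{1/2}$, i.e.\ $q^{1/4}s^{-1/4}M^{-1/2}$, not $q^{1/4}s^{-1/4}L^{-1/2}$.
\end{itemize}
So your argument proves the stated bound with $M$ and $L$ interchanged. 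For the same reason, your reduction ``if $L<s$ the bound is trivial'' is false for the stated inequality. Take $s=q^{2/5}$, $M=q$, $L=q^{7/20}$: all three terms are $<1$, while your shift is impossible. Only $M<s$ makes the stated bound trivial.

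The repair is symmetry. $\mathrm{Kl}_3(aml;q)$ depends only on $ml$, and the hypotheses on $(\alpha,M)$ and $(\beta,L)$ are identical. So run your argument with $m$ and $l$ swapped: shift $m\mapsto m+hs$ with $H\asymp M/s$ and complete $l$ modulo $q/s$. This is exactly the paper's proof.

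Step~1 also needs two corrections.
\begin{itemize}
\item The shift $l\mapsto l+hs$ must move $\beta_l$ together with the argument, and it is then an identity. There is no $O(MHs)$ cost; you only need $Hs\leqslant L$ so that Cauchy--Schwarz over $l$ costs $O(L)$.
\item Your ``cleanest arrangement'' with $l$ inside the square does not work. The factor $\mathrm{Kl}_3(a\bar r^3ml;s)$ then varies inside the square and cannot be pulled out. Moreover, the average over $h$ just reproduces $\sum_l\beta_l\mathrm{Kl}_3(aml;q)$, so you recover only the $s=1$ bound. You must keep both $m$ and $l$ outside the square and only $h$ inside, as in your displayed reduction.
\end{itemize}

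Finally, with only two Kloosterman factors, the degeneracy in Lemma~\ref{lem:S-bounds} is just $p\mid(h_1-h_2,\nu)$, because $(s,r)=1$. The divisor bookkeeping you flag as the main obstacle is therefore routine and costs only $q^{\varepsilon}$.
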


An application of the Cauchy--Schwarz inequality gives
\begin{align}\label{eqthm5BM}
\bigg|\sum_{M\leqslant m\leqslant 2M}\sum_{L\leqslant l \leqslant 2L} \alpha_m \beta_l \mathrm{Kl}_3(aml;q)\bigg|^2\ll\|\beta_l\|^{2}\sum_{L\leqslant l \leqslant 2L}\Bigl|\sum_{M\leqslant m\leqslant 2M}\alpha_m\mathrm{Kl}_3(aml;q)\Bigr|^2,
\end{align}
which reduces the problem to estimating the sum
\begin{align}\label{eqSumKlo}
\sum_{L\leqslant l \leqslant 2L}\Bigl|\sum_{M\leqslant m\leqslant 2M}\alpha_m\mathrm{Kl}_3(aml;q)\Bigr|^2.
\end{align}
The analysis of \eqref{eqSumKlo} bifurcates according to the factorization structure of $q$.

\begin{proof}[Proof of Theorem \ref{thmBKs}]
We begin the proof of Theorem \ref{thmBKs} by applying the following general differencing lemma (see \cite[Proposition 21]{BM2015}).  
\begin{lemma}\label{lem:Weyl-differencing}
Let $b, b_{1i}, b_{2i} : \mathbb{Z} \to \mathbb{C}$ (with $1 \leqslant i \leqslant I$), $r_2 \in \mathbb{N}$, and $R_2 \in \mathbb{R}$ satisfy
\[
b(k) = \sum_{i=1}^{I} b_{1i}(k) b_{2i}(k) \qquad (k \in \mathbb{Z}),
\]
and
\[
b_{2i}(k + r_2) = b_{2i}(k), \qquad |b_{2i}(k)| \leqslant R_2 \qquad (k \in \mathbb{Z}, \; 1 \leqslant i \leqslant I).
\]
Assume further that each $b_{1i}$ is supported on a finite interval, and let $H \in \mathbb{N}$. Then
\[
\Bigl| \sum_{k} b(k) \Bigr|^{2} \leqslant H r_2 R_2^{2} I \sum_{i=1}^{I} \sum_{\substack{k_1, k_2 
\\ k_1 \equiv k_2 \mods{H r_2}}} b_{1i}(k_1) \overline{b_{1i}(k_2)}.
\]
\end{lemma}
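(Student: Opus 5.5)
The plan is a direct Cauchy--Schwarz argument: first exploit the periodicity of the $b_{2i}$ to pull them out of the sum, then split each residue class further so that the congruence condition becomes modulo $Hr_2$. No arithmetic input is needed. Because each $b_{1i}$ has finite support, every sum below is finite, so all rearrangements are legitimate.

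\emph{Step 1 (isolating the periodic factors).} Write $\sum_k b(k)=\sum_{i=1}^I\sum_k b_{1i}(k)b_{2i}(k)$. Group the $k$ according to their residue class $c \bmod r_2$. Since $b_{2i}$ is $r_2$-periodic, this gives
\[
\sum_k b(k)=\sum_{i=1}^{I}\sum_{c \bmod r_2} b_{2i}(c)\sum_{k\equiv c \mods{r_2}} b_{1i}(k).
\]
Apply Cauchy--Schwarz over the $I r_2$ pairs $(i,c)$, and use $|b_{2i}(c)|\leqslant R_2$. This yields
\[
\Bigl|\sum_k b(k)\Bigr|^2\leqslant I r_2 R_2^2\sum_{i=1}^{I}\sum_{c \bmod r_2}\Bigl|\sum_{k\equiv c \mods{r_2}} b_{1i}(k)\Bigr|^2 .
\]

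\emph{Step 2 (introducing $H$).} For fixed $i$ and $c$, each class $c \bmod r_2$ is the disjoint union of the $H$ classes $c+jr_2 \bmod Hr_2$, where $0\leqslant j<H$. A second Cauchy--Schwarz, over $j$, gives
\[
\Bigl|\sum_{k\equiv c \mods{r_2}} b_{1i}(k)\Bigr|^2\leqslant H\sum_{j=0}^{H-1}\Bigl|\sum_{k\equiv c+jr_2 \mods{Hr_2}} b_{1i}(k)\Bigr|^2 .
\]
As $(c,j)$ ranges over its index set, $c+jr_2$ runs exactly once through all residues modulo $Hr_2$.

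\emph{Step 3 (opening the squares).} Expand each square: for every residue $d \bmod Hr_2$,
\[
\Bigl|\sum_{k\equiv d \mods{Hr_2}} b_{1i}(k)\Bigr|^2=\sum_{k_1\equiv k_2\equiv d \mods{Hr_2}} b_{1i}(k_1)\overline{b_{1i}(k_2)}.
\]
Summing over $d$ turns this into the sum over all pairs $(k_1,k_2)$ with $k_1\equiv k_2 \mods{Hr_2}$. Collecting the factors $I r_2 R_2^2$ from Step 1 and $H$ from Step 2 gives the stated bound $H r_2R_2^2 I$.

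There is no genuine obstacle here. The only point needing care is Step 2: inserting the extra factor $H$ costs nothing in this lemma but enlarges the modulus of the diagonal condition to $Hr_2$. That enlargement is what the later application exploits (Weyl differencing with shifts of size $H$). Note also that the right-hand side is automatically a nonnegative real number, being a sum of squared moduli, so the inequality is meaningful as stated.
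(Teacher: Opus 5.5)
Your argument is correct and complete. The paper gives no proof of its own here, only the citation \cite[Proposition 21]{BM2015}, and your two applications of Cauchy--Schwarz prove exactly the stated inequality with the constant $Hr_2R_2^2I$: the first runs over the $Ir_2$ pairs $(i,c)$ after using the $r_2$-periodicity of the $b_{2i}$, and the second over the $H$ subclasses modulo $Hr_2$ of each class modulo $r_2$.
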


Suppose that $q = r_1 r_2$ with
\begin{equation}\label{q-factorization}
 (r_1, r_2) = 1.
\end{equation}
Let
\begin{equation}\label{r1-factorization}
r_1 = \prod_{j=1}^{J} p_j
\end{equation}
be the factorization of $r_1$ into primes. Define
\[
Q_j = \frac{r_1}{p_j}, \qquad Q_j \bar{Q}_j \equiv 1 \pmod{p_j}.
\]
Let $a$ be an integer with $(a, r) = 1$ as in Theorem~\ref{thmBKs}. Define
\begin{equation}\label{b1b2}
b_2(m) = \mathrm{Kl}_3(a m l \bar{r}_1^3 ; r_2),\qquad
b_1(m) = \alpha_m \mathrm{Kl}_3(a m l \bar{r}_2^3 ; r_1).
\end{equation}
By the twisted multiplicativity of Kloosterman sums, we have
\[
b_1(m) b_2(m) = \alpha_m \Kl_3(a m l; q).
\]
Since $(a, r) = 1$, we have according to Weil's bound
$$
b_2(m)\leqslant R_2 :=\tau(r_2).
$$
Let $V$ be a smooth, non‑negative function supported in $[1/2,3]$ that is identically $1$ on $[1,2]$. Then it suffices to estimate
\begin{equation}\label{eq:Sigma}
\Sigma := \sum_{l \geqslant 1 }
V\Bigl( \frac{l}{L} \Bigr)
\biggl| \sum_{M \leqslant m \leqslant 2M} \alpha_m \Kl_3(a m l; q) \biggr|^{2}.
\end{equation} 
We factor $q = r_1 r_2$ as in~\eqref{q-factorization} and~\eqref{r1-factorization}, 
and then choose a natural number $H$ that satisfies $H \mid r_1$.
Applying Lemma~\ref{lem:Weyl-differencing} with $b_1, b_2$ 
as in~\eqref{b1b2},  we obtain
\[
\Sigma  \ll r^\varepsilon H r_2^2  \sum_{l \geqslant 1} V\left(\frac{l}{L} \right) 
\sum_{\substack{M \leqslant m_1, m_2 \leqslant 2M \\ m_1 \equiv m_2 \mods {H r_2} 
}} b_1 (m_1) \overline{b_1 (m_2)}.
\]
Diagonal terms contribute
\[
\ll (qM)^\ve H r_2^2 L r_1 \sum_{M\leqslant m\leqslant 2M} |\alpha_m|^2 \ll (qLM)^\ve H r_2^2 L M r_1.
\]
We find that
\begin{equation}\label{Sigma-ll}
\Sigma \ll (qLM)^\ve H r_2^2 L M r_1 + q^\varepsilon H r_2^2 
\sum_{\substack{M \leqslant m_1 \neq m_2 \leqslant 2M \\ m_1 \equiv m_2 \mods {H r_2} 
}} \left| \Sigma (m_1, m_2, r_1) \right|,
\end{equation}
where
\[
\Sigma(m_1, m_2, r_1) := \sum_{l} V \left( \frac{l}{L} \right) 
\mathrm{Kl}_3(a m_1 l \bar{r}_2^3; r_1) \overline{\mathrm{Kl}_3(am_2 l \bar{r}_2^3; r_1)}.
\]

By Poisson summation we have
\begin{equation}\label{after-Poisson}
\Sigma(m_1, m_2, r_1) = \frac{L}{r_1} \sum_{h \in \mathbb{Z}}
\widehat{V} \left( \frac{hL}{r_1} \right) \sum_{l \mods{r_1}} 
\Kl_3(a m_1 l \bar{r}_2^3; r_1) \overline{\Kl_3(a m_2 l \bar{r}_2^3; r_1)} 
e \left( -\frac{hl}{r_1} \right),
\end{equation}
where $\widehat{V}$ denotes the Fourier transform of $V$. 
The inner complete exponential sum factorizes, and we define
\[
\mathcal{S}(h, m_1, m_2, p) := \sum_{l \mods{p}} \Kl_3(m_1 l ; p) 
\overline{\Kl_3(m_2 l ; p)} e \left( -\frac{hl}{p} \right),
\]
so that
\begin{equation*}
\sum_{l \bmod{r_1}} 
\Kl_3(am_1 l \bar{r}_2^3; r_1) \overline{\Kl_3(a m_2 l \bar{r}_2^3; r_1)} e \left( -\frac{hl}{r_1} \right)= \prod_{j=1}^{J} \mathcal{S}(\bar{Q}_j h,\bar{Q}_j^3 \bar{r}_2^3 m_1, \bar{Q}_j^3 \bar{r}_2^3 m_2, p_j).
\end{equation*}
We have the following bounds for $\mathcal{S}(h, m_1, m_2, p)$.

\begin{lemma}\label{lem:S-bounds}
Let $p$ be an odd prime, and let $h, m_1, m_2 \in \mathbb{Z}$ and $d \in \mathbb{N}$. Then
\[
\mathcal{S}(h, m_1, m_2, p) \ll p^{1/2} (m_1 - m_2, h, p)^{1/2}, 
\]
\end{lemma}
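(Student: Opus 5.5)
The plan is to pass to the Fourier side, where the $\mathrm{Kl}_3$ correlation becomes a shifted correlation of classical Kloosterman sums, and then to separate the degenerate cases from the generic one.

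\emph{Trivial and degenerate cases.} If $p\mid h$ and $p\mid m_1-m_2$, the claimed bound is $\ll p$. This follows trivially, since $|\mathrm{Kl}_3(n;p)|\leqslant 3$ for $p\nmid n$ (Deligne) and $\mathrm{Kl}_3(0;p)\ll 1$ by direct computation. If exactly one of $m_1,m_2$ is $\equiv 0\pmod p$, say $m_1$, then $\mathrm{Kl}_3(m_1l;p)=\mathrm{Kl}_3(0;p)$ is a constant of size $O(1)$. The sum then reduces to a single Fourier coefficient of $l\mapsto \mathrm{Kl}_3(m_2l;p)$, which I will show below is $\ll p^{1/2}$. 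If both $m_i\equiv0$, the sum is $\ll|\sum_l e(-hl/p)|$, which vanishes unless $p\mid h$, and that case is already covered.

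\emph{Fourier transform.} For $p\nmid m$, I open $\mathrm{Kl}_3(ml;p)=p^{-1}\sum_{x,y\in\mathbb{F}_p^\times}e\bigl((x+y+ml\overline{xy})/p\bigr)$, which is valid also at $l=0$ up to an $O(1)$ correction that I carry along. Summing over $l$ forces $xy\equiv -m\bar n$ and gives
\[
\widehat{\mathrm{Kl}_3(m\,\cdot\,;p)}(n)=\mathrm{Kl}_2(-m\bar n;p)\quad (p\nmid n),
\]
with a bounded value at $n\equiv0$. This already yields the $p^{1/2}$ bound needed in the degenerate case above, via Weil's bound. By Plancherel on $\mathbb{Z}/p\mathbb{Z}$,
\[
\mathcal{S}(h,m_1,m_2,p)=\sum_{n \bmod p}\widehat{K_1}(n)\,\overline{\widehat{K_2}(n+h)}
=\sum_{n\neq 0,-h}\mathrm{Kl}_2(-m_1\bar n;p)\,\overline{\mathrm{Kl}_2(-m_2\overline{(n+h)};p)}+O(1).
\]
It therefore remains to prove square-root cancellation in this shifted correlation of Kloosterman sums whenever $(h,m_1-m_2)\not\equiv(0,0)\pmod p$.

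\emph{Main step (the obstacle).} The sum is a correlation $\sum_n \mathrm{Kl}_2(\gamma_1\cdot n)\overline{\mathrm{Kl}_2(\gamma_2\cdot n)}$, where $\gamma_1:n\mapsto -m_1\bar n$ and $\gamma_2:n\mapsto -m_2\overline{(n+h)}$ are fractional linear maps. The approach is to invoke the correlation theorem for Kloosterman sheaves (Fouvry--Kowalski--Michel). The Kloosterman sheaf $\mathcal{K}\ell_2$ is geometrically irreducible of rank $2$, and $[\gamma]^*\mathcal{K}\ell_2\simeq\mathcal{K}\ell_2$ forces $\gamma=\mathrm{id}$, because of its ramification: it is tame at $0$ and wild with Swan conductor $1$ at $\infty$. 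Hence the correlation is $\ll p^{1/2}$ unless $\gamma_1=\gamma_2$, that is, unless $h\equiv0$ and $m_1\equiv m_2$.

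As an alternative that would avoid sheaves, I would first try to open both $\mathrm{Kl}_2$'s and execute the $n$-sum. This produces a complete sum in four variables with a linear constraint, which can be evaluated by orthogonality into a one-parameter family of Kloosterman-type sums. I expect, however, that the resulting two-dimensional cancellation cannot be reached by Weil's bound alone. Establishing this generic square-root bound is the step I expect to be hardest. Once it is available, combining it with the degenerate cases gives $\mathcal{S}\ll p^{1/2}(m_1-m_2,h,p)^{1/2}$.
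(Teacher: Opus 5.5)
Your proposal is correct and is essentially the paper's argument. The paper also expands $\mathrm{Kl}_3(n;p)=p^{-1}\sum^{*}_{x \bmod p}e(nx/p)S(1,\bar x;p)$ and sums over $l$ (this is your Plancherel step), which yields a correlation of $S(1,\cdot\,;p)$ with its composition by the M\"obius map $x\mapsto m_2x/(m_1-hx)$; it then quotes square-root cancellation for such correlations unless this map is the identity, i.e.\ unless $p\mid h$ and $p\mid m_1-m_2$, citing \cite[Propositions 3.3 and 3.4]{DRF1997} where you cite the Fouvry--Kowalski--Michel correlation theorem.

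One small repair is needed. The ramification of $\mathcal{K}\ell_2$ only forces $\gamma$ to fix $0$ and $\infty$, so it does not settle the case $p\mid h$, $m_1\not\equiv m_2 \pmod p$, where $\gamma$ is $x\mapsto ax$ with $a\not\equiv 1$. That case needs either the finer local monodromy at $\infty$ or the direct orthogonality computation $\sum_{x \bmod p}S(1,x;p)S(1,ax;p)=-p$, which even gives $\mathcal{S}=O(1)$ there.
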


\begin{proof}
Substituting the definition
\[
\Kl_3(mld;p)=\frac{1}{p}\;\sideset{}{^*}
\sum_{x\mods p}e\left(\frac{mlx}{p}\right)
S(1,\bar{x};p),
\]
and executing the $l \mods p$ sum, we obtain
\[
\mathcal{S}(h, m_1, m_2, p) := \frac{1}{p}\;\sideset{}{^*}\sum_{\substack{x,y \mods p 
\\ m_1x-m_2y\equiv l \mods{s}}}S(1,\bar{x};p)S(1,\bar{y};p).
\]
Notice that if we set
\[
\gamma_1 = \gamma_2 = \begin{pmatrix} 0 & 1 \\ 1 & 0 \end{pmatrix},
\qquad
\gamma_3 = \begin{pmatrix} \overline{m_1 } & 0 \\-\overline{h} & \overline{m_2 } \end{pmatrix},
\]
then a straightforward computation shows
\[
\gamma_2 \gamma_3 \gamma_1^{-1} = \begin{pmatrix}
\overline{m_2 } & 0 \\
-\overline{h}     & \overline{m_1 }
\end{pmatrix}
\begin{pmatrix} 0 & 1 \\ 1 & 0 \end{pmatrix}
= \begin{pmatrix}
0 & \overline{m_2 } \\
\overline{m_1 } & -\overline{h}
\end{pmatrix}.
\]
After a change of variable $x \mapsto \gamma_1^{-1}(x)$ (which is permissible since $\det(\gamma_1) \not\equiv 0 \pmod p$), we obtain
\[
\mathcal{S}(h, m_1, m_2, p) = \frac{1}{p}\; 
\sideset{}{^*}\sum_{x \mods p} S\bigl(1, x; p\bigr) \,
\overline{S}\bigl(1, \gamma_2 \gamma_3 \gamma_1^{-1}(x); p\bigr).
\]

We are in a position to apply the estimates from \cite[Propositions 3.3 and 3.4]{DRF1997} amount to the following. Given $\gamma = \begin{pmatrix} a & b \\ c & d \end{pmatrix}$ 
be a matrix with integer entries such that $\det(\gamma) = ad - bc \not\equiv 0 \pmod p$, then
\[
\sideset{}{^*}\sum_{\alpha \mods p} S(1, \alpha; p) \,
\overline{S\bigl(1, \gamma(\alpha); p\bigr)} \ll p^{\frac{3}{2}} 
+ p^{2}\delta_{a-d \equiv b \equiv c \equiv 0 \mods p}.
\]
Now we apply this estimate with the matrix $\gamma_2 \gamma_3 \gamma_1^{-1}$.  
The condition $a-d \equiv b \equiv c \equiv 0 \pmod p$ becomes
\[
0 - (-\overline{h}) \equiv \overline{m_2 } \equiv \overline{m_1 } \equiv 0 \pmod p,
\]
which is equivalent to $h \equiv 0 \pmod p$ and $m_1  \equiv m_2  \pmod p$.   
Therefore, we have the following bound
\[
\mathcal{S}(h, m_1, m_2, p) \ll p \delta_{\substack{h\equiv 0\mods{p}\\ m_1\equiv m_2\mods{p}}}+p^{\frac{1}{2}}.
\]
This gives the stated result.
\end{proof}

We conclude from Lemma \ref{lem:S-bounds} that
\begin{equation*}
\sum_{l \mods{r_1}}\Kl_3(a m_1 l \bar{r}_2^3; r_1) 
 \overline{\Kl_3(a m_2 l \bar{r}_2^3; r_1)} e \left( -\frac{hl}{r_1} \right) 
 \ll r_1^{\frac{3}{2}+\varepsilon} (m_1 - m_2, r_1)^{\frac{1}{2}} \delta_{(m_1 - m_2, r_1) \mid h}.
\end{equation*}
Substituting this back into~\eqref{after-Poisson}, we obtain
\begin{align*}
\Sigma (m_1, m_2, r_1) &\ll  M r_1^{\frac{1}{2}+\varepsilon} (m_1 - m_2, r_1)^{\frac{1}{2}} 
\sum_{\substack{h \\ (m_1 - m_2, r_1) \mid h}} \left| \widehat{V} \left( \frac{hM}{r_1} \right) \right| \\
&\ll M r_1^{\frac{1}{2}+\varepsilon} (m_1 - m_2, r_1)^{\frac{1}{2}} 
\left( 1 + \frac{r_1}{M (m_1 - m_2, r_1)}\right) \\
&\ll M r_1^{\frac{1}{2}+\varepsilon} (m_1 - m_2, r_1)^{\frac{1}{2}} 
+ r_1^{\frac{3}{2}+\varepsilon} (m_1 - m_2, r_1)^{-\frac{1}{2}}.
\end{align*}
In view of the definition of $H$ and~\eqref{q-factorization}, we conclude that
\begin{align*}
&\sum_{M \leqslant m \leqslant 2M} \sum_{0 \neq k \ll \frac{M}{H r_2}} 
\bigl|\Sigma(m, m + k H r_2, r_1) \bigr| \\
&\qquad\ll  r_1^\varepsilon \sum_{M \leqslant m \leqslant 2M}
\sum_{0 \neq k \ll \frac{M}{H r_2}} 
\Bigl( M r_1^{\frac{1}{2}} (k, r_1)^{\frac{1}{2}} H^{\frac{1}{2}} + r_1^{\frac{3}{2}} (k, r_1)^{\frac{1}{2}} H^{-\frac{1}{2}} \Bigr) \\
&\qquad\ll r_1^\varepsilon \frac{M^2}{H r_2} M H^{\frac{1}{2}} r_1^{\frac{1}{2}} 
+ r_1^\varepsilon \frac{M^2}{H r_2} \frac{r_1^{\frac{3}{2}}}{H^{\frac{1}{2}}} .
\end{align*}
Inserting this into~\eqref{Sigma-ll} yields the final bound
\begin{equation}\label{eq:final-bound}
\Sigma \ll q^\varepsilon \biggl( H r_2 M L r_1 + M^2 r_1^{\frac{1}{2}} L (H r_2)^{\frac{1}{2}} 
+ \frac{M^2 r_1^{\frac{3}{2}}}{(H r_2)^{\frac{1}{2}}} \biggr).
\end{equation}

This estimate holds for every decomposition $q = r_1 r_2$ with $(r_1, r_2) = 1$ and for every divisor $H$ satisfying $H \mid r_1$. Note that the right-hand side depends only on the product $H r_2$, not on the individual factors $H$ and $r_2$. This is to be expected conceptually, because the product $H r_2$ acted as a single differencing step in Lemma~\ref{lem:Weyl-differencing}.

We now return to Theorem~\ref{thmBKs}. Given a divisor $s \mid q$, define $r_2$ as the largest divisor of $s$ that is coprime to $(s, \frac{q}{s})$. Set $H =\frac{s}{ r_2}$ and $r_1 = \frac{q}{r_2} $, that is,
\[
H = \Bigl( s,\, \bigl( s, \tfrac{q}{s} \bigr)^\infty \Bigr), \qquad 
r_2 = \frac{s}{H}, \qquad 
r_1 = \frac{q}{r_2}.
\]
Here $(\cdot, \cdot)^\infty$ denotes the largest common divisor at all finite places.
With this choice of $H$ and the decomposition $q = r_1 r_2$, conditions~\eqref{q-factorization} and the requirement $H \mid r_1$ are satisfied. 
This completes the proof of Theorem~\ref{thmBKs}.
\end{proof}

According to Theorem \ref{thmBKs}, we have derived the following corollary.
\begin{corollary}\label{main corollary}
Under the conditions of Theorem \ref{thmBKs}, let q be a squarefree number which has a factor $s\in(q^{\frac{1}{3}-\varepsilon},q^{\frac{1}{3}+\varepsilon})$. For $\alpha_m=(\lambda_f(m))_{m\leqslant M}$, $\beta_l=\{1\}$, we have
\begin{equation}\label{eqBKs}
\begin{aligned}
&\sum_{m \leqslant M} \sum_{l \leqslant L} \lambda_f(m) \mathrm{Kl}_3(a m l; q)\ll q^{\varepsilon} ML\bigl(q^{\frac{1}{6}}M^{-\frac{1}{2}} + q^{-\frac{1}{6}} + q^{\frac{1}{6}}L^{-\frac{1}{2}}\bigr).
\end{aligned}
\end{equation}
\end{corollary}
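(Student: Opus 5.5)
The plan is to deduce the corollary directly from Theorem \ref{thmBKs} by specializing the coefficients and the divisor $s$, after a dyadic decomposition. First split the ranges $m\leqslant M$ and $l\leqslant L$ into $O(\log M\log L)\ll q^{\varepsilon}$ dyadic blocks $[M',2M']$, $[L',2L']$ with $M'\leqslant M$, $L'\leqslant L$. On each block take $\alpha_m=\lambda_f(m)/\max_{m\leqslant 2M}|\lambda_f(m)|$ and $\beta_l=1$. Since $|\lambda_f(m)|\leqslant d(m)\ll m^{\varepsilon}$ and $M$ may be assumed $\ll q^{O(1)}$ (otherwise the bound is trivial), the normalization costs only a factor $q^{\varepsilon}$. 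This makes $|\alpha_m|,|\beta_l|\leqslant1$, so Theorem \ref{thmBKs} applies to each block.

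Next insert the hypothesis $s\in(q^{1/3-\varepsilon},q^{1/3+\varepsilon})$, $s\mid q$, into the three terms of \eqref{eqBKs}:
\begin{equation*}
M'^{-\frac12}s^{\frac12}\leqslant q^{\frac16+\varepsilon}M'^{-\frac12},\qquad q^{-\frac14}s^{\frac14}\leqslant q^{-\frac16+\varepsilon},\qquad L'^{-\frac12}q^{\frac14}s^{-\frac14}\leqslant q^{\frac16+\varepsilon}L'^{-\frac12}.
\end{equation*}
Each block therefore contributes
\begin{equation*}
\ll q^{\varepsilon}\bigl(M'^{\frac12}L'q^{\frac16}+M'L'q^{-\frac16}+M'L'^{\frac12}q^{\frac16}\bigr).
\end{equation*}
Every term is increasing in $M'$ and in $L'$, so each is at most the corresponding expression with $M,L$. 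Summing over the $q^{\varepsilon}$ blocks then gives the corollary after renaming $\varepsilon$.

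The only real obstacle is bookkeeping of the $\varepsilon$-losses. These come from the divisor bound on $\lambda_f$, from the number of dyadic blocks, and from the fact that $s$ is only within $q^{\pm\varepsilon}$ of $q^{1/3}$. They should be absorbed into a single $q^{\varepsilon}$, which requires $\log(ML)\ll_{\varepsilon}q^{\varepsilon}$. When $M$ or $L$ is so large that this fails, one can instead note that the right-hand side exceeds the trivial bound $ML\,q^{\varepsilon}$ only in degenerate ranges. Alternatively, one can simply assume $M,L\leqslant q^{O(1)}$, as in the application.

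The one delicate point is that the choice of $s$ must produce an $s$ satisfying the divisor conditions of Theorem \ref{thmBKs}. This is automatic, since the theorem holds for every $s\mid q$.
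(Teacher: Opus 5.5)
Your proposal is correct and follows the paper's route: the paper obtains the corollary simply by substituting $s\approx q^{1/3}$ into Theorem~\ref{thmBKs}. Your dyadic splitting, the normalization $\alpha_m=\lambda_f(m)/\max|\lambda_f|$, and the observation that all three terms are monotone in $M'$ and $L'$ fill in bookkeeping the paper leaves implicit.

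One caveat concerns super-polynomial $M$ or $L$. Your parenthetical claim that the bound is ``trivial'' there, and the later appeal to ``degenerate ranges'', are wrong: for $M,L\geqslant q^{2/3}$ the right-hand side is $\asymp q^{\varepsilon-1/6}ML$, a genuine saving over the trivial bound. You should instead take your other option and restrict to $M,L\leqslant q^{O(1)}$, which is all the application needs, or read $q^{\varepsilon}$ as $(qML)^{\varepsilon}$.
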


We establish an estimate for the general bilinear form by using the exponential method as follows.
\begin{lemma}\label{exponent pairs}
    Let $q\geqslant 1$ be a square-free integer, let $a\in\mathbb{Z}$ be coprime to $q$, and let $\alpha_{m}$ and $\beta_l$ be defined as in \eqref{B()}. 
Then we have 
\begin{equation*}
\sum_{m\leqslant M}\sum_{l\in \mathcal{L}}\alpha_m\beta_l\mathrm{Kl}_3(aml;q)\ll q^{\varepsilon}Q^{C}\|\alpha\|_2\|\beta\|_2\big(ML\big)^{\frac{1}{2}}\big(q^{\frac{1}{4}}M^{-\frac{1}{2}}+q^{-\frac{1}{4}}+L^{-\frac{1}{2}}\big),
\end{equation*}
where the implied constant depends only on $\varepsilon$, $Q\geqslant1$, and $C\geqslant0$ is an absolute constant.
\end{lemma}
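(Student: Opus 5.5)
The bound in Lemma~\ref{exponent pairs} has the shape of the Fouvry--Kowalski--Michel ``type II'' estimate, with the $\ell^2$ norms replacing the sup norms. I would prove it by Cauchy--Schwarz followed by completion, without using the factorization of $q$ beyond multiplicativity. First apply Cauchy--Schwarz in $m$:
\[
\Bigl|\sum_{m}\sum_{l}\alpha_m\beta_l\mathrm{Kl}_3(aml;q)\Bigr|^2\leqslant \|\alpha\|_2^2\sum_{m\leqslant M}\Bigl|\sum_{l\in\mathcal{L}}\beta_l\mathrm{Kl}_3(aml;q)\Bigr|^2 .
\]
Majorize the $m$-sum by a smooth weight $W(m/M)$ and open the square. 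This gives
\[
\sum_{l_1,l_2}\beta_{l_1}\overline{\beta_{l_2}}\sum_m W(m/M)\,\mathrm{Kl}_3(aml_1;q)\overline{\mathrm{Kl}_3(aml_2;q)} .
\]
Apply Poisson summation modulo $q$ to the $m$-sum, exactly as in \eqref{after-Poisson}. The inner sum becomes
\[
\frac{M}{q}\sum_{h}\widehat{W}\Bigl(\frac{hM}{q}\Bigr)\,\mathcal{C}(h,l_1,l_2),
\]
where $\mathcal{C}$ is the complete sum modulo $q$. By the Chinese remainder theorem, $\mathcal{C}$ factors as a product over $p\mid q$ of the sums $\mathcal{S}(\cdot,\cdot,\cdot,p)$ of Lemma~\ref{lem:S-bounds}, up to the usual twisting of the arguments by inverses of cofactors.

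Lemma~\ref{lem:S-bounds} then gives
\[
\mathcal{C}(h,l_1,l_2)\ll q^{\frac32+\varepsilon}\,(l_1-l_2,h,q)^{1/2}\cdot q^{-1}\cdot q^{1/2}
\]
in the normalization where each $\mathrm{Kl}_3$ is bounded. Concretely, each prime contributes $p^{1/2}$, or $p$ when $p\mid(h,l_1-l_2)$. Now split into two cases.

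\emph{The frequency $h=0$.} Here the gcd is $(l_1-l_2,q)$. Sum over pairs $l_1,l_2$ using $|\beta_{l_1}\beta_{l_2}|\leqslant \tfrac12(|\beta_{l_1}|^2+|\beta_{l_2}|^2)$ together with the divisor-sum bound $\sum_{l_2\in\mathcal L}(l_1-l_2,q)^{1/2}\ll q^{\varepsilon}(L+q^{1/2})$. The diagonal $l_1=l_2$ gives $M\|\beta\|_2^2$. This produces the term $ML\,\|\beta\|_2^2\cdot L^{-1}$ (the $L^{-1/2}$ term) and a term of size $M\,q^{-1/2}L\|\beta\|_2^2$ (the $q^{-1/4}$ term).

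\emph{The frequencies $h\neq0$.} These are truncated at $|h|\ll q^{1+\varepsilon}/M$ by the decay of $\widehat{W}$. Each contributes $\frac{M}{q}\,q^{1/2}(h,l_1-l_2,q)^{1/2}$. Summing over $h$ and then over $l_1,l_2$ gives $q^{1/2+\varepsilon}L\|\beta\|_2^2$, where the gcd average costs only $q^\varepsilon$. After taking square roots this is the $q^{1/4}M^{-1/2}$ term.

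Collecting the three contributions and taking square roots gives
\[
\|\alpha\|_2\|\beta\|_2(ML)^{1/2}\bigl(q^{1/4}M^{-1/2}+q^{-1/4}+L^{-1/2}\bigr)q^{\varepsilon}.
\]
The factor $Q^{C}$ absorbs the constants coming from the smooth weights and the divisor functions $\tau(q)$ produced by the multiplicative bound: a product of $\omega(q)$ implied constants is $\ll q^\varepsilon$.

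The main obstacle is the bookkeeping of the complete sum $\mathcal{C}$ modulo the squarefree $q$. One has to check that the CRT twisting, i.e.\ multiplying $h$ and $l_i$ by $\bar Q_j$-type factors, does not change the gcd condition in Lemma~\ref{lem:S-bounds}. It does not, since those factors are units. One must also verify that the bound is uniform over primes where $h\equiv0$ but $l_1\not\equiv l_2$. For such primes Lemma~\ref{lem:S-bounds} still gives $p^{1/2}$, so the product bound $q^{1/2}(h,l_1-l_2,q)^{1/2}$ holds with an implied constant $C^{\omega(q)}\ll q^\varepsilon$.
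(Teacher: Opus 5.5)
Your argument is correct. It differs from the paper mainly in being an actual proof. The paper only points to \cite[Theorem 1.16]{FKM2014}, a prime-modulus result, and then quotes \cite[Lemma 2.3]{Xi} with the exponent pair $(\kappa,\lambda)=(\frac12,\frac12)$, under the side condition $M<q$.

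That exponent pair is exactly the completion (P\'olya--Vinogradov) bound $q^{1/2+\varepsilon}$ for the incomplete correlation sums $\sum_m \mathrm{Kl}_3(aml_1;q)\overline{\mathrm{Kl}_3(aml_2;q)}$. So your chain is the argument hidden inside the citation, written out for squarefree $q$: Cauchy--Schwarz in $m$, Poisson modulo $q$, CRT factorisation into the local sums of Lemma~\ref{lem:S-bounds}, and averaging of $(h,l_1-l_2,q)$.

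What your route buys:
\begin{itemize}
\item the only arithmetic input is the paper's own Lemma~\ref{lem:S-bounds};
\item the restriction $M<q$ disappears;
\item one sees that $C=0$ is admissible;
\item it is transparent why no smoothness of $q$ is needed, since a composite modulus only costs divisor-type factors in the gcd averages.
\end{itemize}
What the citation buys is brevity, and a framework in which, for smooth $q$, better exponent pairs from $q$-van der Corput become available; pure completion cannot give those.

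Two repairs are needed.

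First, your displayed bound for $\mathcal{C}(h,l_1,l_2)$ multiplies out to $q^{1+\varepsilon}(l_1-l_2,h,q)^{1/2}$. The correct bound is $\mathcal{C}(h,l_1,l_2)\ll q^{1/2+\varepsilon}(h,l_1-l_2,q)^{1/2}$. This is what your next sentence says and what you use afterwards.

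Second, the frequency $h=0$, and more generally every prime dividing $h$ or $l_1l_2$, uses Lemma~\ref{lem:S-bounds} in cases its proof in the paper does not reach, because that proof inverts $h$, $m_1$, $m_2$. The bound is still true there:
\begin{itemize}
\item For $p\mid h$ and $l_1\not\equiv l_2\pmod p$ with $p\nmid l_1l_2$, it is the standard fact that distinct multiplicative translates of the Kloosterman sheaf $\mathcal{K}\ell_3$ are geometrically non-isomorphic.
\item If $p\mid l_1$, then $\mathrm{Kl}_3(al_1x;p)=O(p^{-1})$ for all $x$, so the local factor is trivially $O(1)$.
\item The prime $p=2$ is also trivial.
\end{itemize}
State these cases explicitly rather than relying on the lemma as it is proved in the paper.
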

\begin{proof}
The bound follows from the general theory of bilinear forms with Kloosterman sums developed by Fouvry, Kowalski and Michel \cite[Theorem 1.16]{FKM2014}, which applies to algebraic trace functions. For the specific case of $\mathrm{Kl}_3$, we apply the exponent pair method to estimate the upper bound. If $M<q$, then by \cite[Lemma 2.3]{Xi} one has the estimate
\begin{equation}\label{exponent pair}
\begin{aligned}
    & \sum_{m\leqslant M}\sum_{l\leqslant L}\alpha_m\beta_l K_q(ml)\\
&\qquad \ll_{\varepsilon}\big(qML\big)^{\varepsilon}\big(ML\big)^{\frac{1}{2}}\|\alpha\|_2\|\beta\|_2\big(q^{\frac{\kappa}{2}}M^{\frac{\lambda-\kappa-1}{2}}+q^{-\frac{1}{4}}+L^{-\frac{1}{2}}\big).
\end{aligned}
\end{equation}

For the proof of Theorem \ref{main theorem}, we use the exponent pair $(\kappa, \lambda)=(\frac{1}{2},\frac{1}{2})$, in which case $q$ is not required to have only a small prime factor.
Then the described bound is obtained. 
\end{proof}
The upper bound estimate for the special bilinear form studied from Sharma \cite{Sharma} is as follows:

\begin{lemma}\label{1_L}
Under the assumptions of Corollary \ref{exponent pairs}, let $\alpha_m=(\lambda_f(m))_{m\leqslant M}$ and $\mathcal{L}=\{1\}$. Then we have
\begin{equation*}
\sum_{m\leqslant M}\lambda_f(m)\mathrm{Kl}_3(am;q)\ll q^{\varepsilon}M(q^{-\frac{1}{4}}+q^{\frac{3}{8}}M^{-\frac{1}{2}}+q^{\frac{3}{4}}M^{-1}),
\end{equation*}
where the implied constant depends only on $\varepsilon$.
\end{lemma}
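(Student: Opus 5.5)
The approach is the standard ``Voronoi after Fourier expansion'' treatment of a linear sum of $\mathrm{GL}(2)$ coefficients against a trace function. We would follow Sharma's argument for the $d_3$/smooth-moduli setting, with the divisor function replaced by $\lambda_f$; since $f$ has level $1$, the Voronoi formula for $\lambda_f$ involves no extra main term.

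\textbf{Step 1: smoothing and Fourier expansion.} First insert a smooth dyadic weight $W(m/M)$. The sharp cutoff at $M$ costs only $q^{\varepsilon}$ via a standard partition of unity. Then open $\mathrm{Kl}_3(am;q)$ through its discrete Fourier transform modulo $q$:
\[
\mathrm{Kl}_3(am;q)=\frac{1}{\sqrt q}\sum_{h \bmod q}\widehat{\mathrm{Kl}_3}(h)\, e\Bigl(-\frac{hm}{q}\Bigr),
\]
where, for squarefree $q$ with $(a,q)=1$, one has $\widehat{\mathrm{Kl}_3}(h)=\mathrm{Kl}_2$-type sums of the form $q^{-1/2}S(\cdot,\bar h\cdot;q)$ for $(h,q)=1$, together with degenerate contributions when $(h,q)>1$. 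This reduces matters to sums $\sum_m \lambda_f(m)W(m/M)e(-hm/q)$ for each $h$.

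\textbf{Step 2: Voronoi.} Next apply the Voronoi formula for level $1$ holomorphic forms to each additive twist $e(-hm/d)$, where $d=q/(h,q)$. This gives
\[
\sum_m \lambda_f(m)W(m/M)e\Bigl(-\frac{hm}{q}\Bigr)=\frac{M}{d}\sum_{n}\lambda_f(n)\, e\Bigl(\frac{\bar h' n}{d}\Bigr)\widetilde W\Bigl(\frac{nM}{d^2}\Bigr),
\]
where $\widetilde W$ is a Bessel transform. This transform is negligible unless $n\ll q^{\varepsilon}d^2/M$.

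\textbf{Step 3: recombine over $h$.} Recombining the $h$-sum produces, for each $n$, a complete sum in $h$ of $\widehat{\mathrm{Kl}_3}(h)e(\bar h n/q)$. This is again a hyper-Kloosterman-type sum, essentially $\mathrm{Kl}_3(\pm \bar a n;q)$ up to normalization, bounded by $q^{\varepsilon}$ via Deligne/Weil together with multiplicativity over the prime factors of $q$. The generic term then contributes
\[
\ll q^{\varepsilon}\frac{M}{q}\cdot q^{1/2}\sum_{n\ll q^{2}/M}|\lambda_f(n)|\,(nM/q^2)^{-1/4}.
\]
Using Rankin--Selberg, this is $\ll q^{\varepsilon}q^{3/2}M^{-1}\cdot M^{0}$. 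A more careful normalization should give the term $q^{3/4}M^{-1}\cdot M=q^{3/4}$. The contributions from $(h,q)=g>1$ are treated the same way with modulus $q/g$ and with the trivial bound on $\widehat{\mathrm{Kl}_3}$ at primes dividing $g$; summing over $g\mid q$ costs $q^{\varepsilon}$. The zero frequency $h\equiv0$ gives $q^{-1/2}\widehat{\mathrm{Kl}_3}(0)\sum\lambda_f(m)W$. This is tiny since $\widehat{\mathrm{Kl}_3}(0)\ll q^{-1/2}$, and together with the decay $\sum\lambda_f(m)W(m/M)\ll M^{\varepsilon}$ it yields a term within $Mq^{-1/4}$.

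\textbf{Step 4: the main obstacle.} Reaching the middle term $q^{3/8}M^{1/2}$ is the hard part. The dual sum from Step 3 is a sum of $\lambda_f(n)$ against $\mathrm{Kl}_3(\bar a n;q)$ of length $N\asymp q^2/M$, and bounding it trivially only gives $q^{3/4}$. To gain more, we would use the factorization of $q$, smooth with small prime factors, which lets us choose $q=r_1r_2$ at any scale. We would then apply Cauchy--Schwarz and the differencing Lemma~\ref{lem:Weyl-differencing} with shift modulus $Hr_2$ to the dual sum, followed by Poisson in the remaining variable. The resulting complete correlation sums are controlled by Lemma~\ref{lem:S-bounds}, exactly as in the proof of Theorem~\ref{thmBKs}, or by applying Theorem~\ref{thmBKs} directly to the dual sum with $\alpha=\lambda_f$ and $L=1$. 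Optimizing $Hr_2\approx q^{1/4}$ should balance to $q^{3/8}M^{1/2}$, and the $Mq^{-1/4}$ term arises from the off-diagonal square-root cancellation. We expect the bookkeeping of gcd-degenerate frequencies and the choice of the factor size to be the delicate points. If the balancing fails, we would simply quote Sharma's estimate, whose proof uses only Voronoi and Kloosterman bounds and so transfers verbatim from $d_2$ to $\lambda_f$.
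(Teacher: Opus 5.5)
The paper gives no argument for this lemma: it simply quotes Sharma's estimate (1.8). Your closing fallback is therefore exactly the paper's proof, and it is the only part of your proposal that actually establishes the statement. The self-contained route in Steps 1--4 has genuine gaps.

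\textbf{Step 3.} The problem here is not one of normalization. For prime $q$ (and prime by prime for squarefree $q$), completing and applying Voronoi gives
\[
\sum_{m}\lambda_f(m)\mathrm{Kl}_3(am;q)V(m)=\frac{1}{q}\sum_{n\geqslant 1}\lambda_f(n)\,\check{K}(n)\,\check{V}\Bigl(\frac{n}{q^{2}}\Bigr)+(\text{lower-order terms}),
\]
where $\check{K}(n)=q^{-1/2}\sum_{x \bmod q}\mathrm{Kl}_3(ax;q)\,\mathrm{Kl}_2(xn;q)$. A direct computation gives $\check{K}(n)=e(a\bar{n}/q)+O(q^{-1})$ for $(n,q)=1$, up to the sign convention in Voronoi. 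The Voronoi transform is Fourier transform, then inversion, then Fourier transform, and each Fourier transform lowers the rank of a Kloosterman sum by one. So the dual function is an additive character of $\bar n$, not $\mathrm{Kl}_3(\pm\bar a n;q)$.

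Moreover, $\check V(n/q^2)\ll M$ on the whole effective range $n\ll q^{2+\varepsilon}/M$; the $x^{-1/4}$ Bessel decay only sets in where $\check V$ is already negligible. Hence the trivial bound for the dual side is $q^{1+\varepsilon}$. Your own display evaluates to $q^{3/2}$, and no renormalization turns either into $q^{3/4}$. More fundamentally, Voronoi plus completion (equivalently, completion plus the uniform Wilton bound $\sum_{m\leqslant M}\lambda_f(m)e(\alpha m)\ll M^{1/2+\varepsilon}$) gives at best the P\'olya--Vinogradov-type bound $q^{1/2+\varepsilon}M^{1/2}$. The lemma beats this by a power of $q$ throughout $q^{1/2}<M<q^{3/2}$, so it cannot come out of Steps 1--3.

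\textbf{Step 4.} This step does not supply the missing input either.
\begin{itemize}
\item Theorem \ref{thmBKs} with $L=1$ is trivial: its last term is $ML\cdot L^{-1/2}q^{1/4}s^{-1/4}=Mq^{1/4}s^{-1/4}\geqslant M$ for $s\mid q$.
\item Lemma \ref{lem:Weyl-differencing}, applied to a linear sum in $\lambda_f$, leaves you with $\sum_m\lambda_f(m)\overline{\lambda_f(m+kHr_2)}$ weighted by a correlation of two $\mathrm{Kl}_3$'s modulo $r_1$. That is a twisted shifted-convolution sum with no smooth variable left. In the proof of Theorem \ref{thmBKs}, the Poisson step runs over the smooth variable $l$ created by Cauchy--Schwarz, and only that step brings Lemma \ref{lem:S-bounds} into play. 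Here there is nothing to apply Poisson to.
\end{itemize}
So nothing in the proposal produces $q^{3/8}M^{1/2}$, $Mq^{-1/4}$ or $q^{3/4}$. That is precisely the nontrivial content the paper imports from Sharma as a black box.

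Finally, the paper quotes (1.8) directly as a bound for $\lambda_f$, so no ``verbatim transfer from $d_2$'' is involved. Such a transfer could not be taken for granted anyway, since $d_2=1*1$ admits Poisson summation in each factor while $\lambda_f$ does not.
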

\begin{proof}
See \cite[(1.8)]{Sharma}.
\end{proof}

\section{Proof of Theorem \ref{main theorem}}
\subsection{Auxiliary lemmas}
To prove Theorem \ref{main theorem}, we will need several lemmas.
First, we recall a form of the Poisson summation formula in arithmetic progressions.

\begin{lemma}
For any positive integer $q\geqslant 1$, any function $K$ that is $q$-periodic, and any smooth function $V$ compactly supported on $\mathbb{R}$, we have
\begin{equation*}
\sum_{n\geqslant1}K(n)V(n)=\frac{1}{\sqrt{q}}\sum_{m\geqslant1}
\widehat{K}(m)\widehat{V}(\frac{m}{q}),
\end{equation*}
and in particular
\begin{equation*}
\sum_{\substack{n\geqslant1\\n\equiv a\mods{q}}}V(n)=\frac{1}{q}\sum_{m\geqslant1}e\Bigl{(}\frac{am}{q}\Bigl{)}\widehat{V}\Bigl{(}\frac{m}{q}\bigl{)}.
\end{equation*}
\end{lemma}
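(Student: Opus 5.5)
The first identity comes from splitting $n$ into residue classes modulo $q$, applying classical Poisson summation to each class, and expanding $K$ by its finite Fourier inversion; the second is the special case $K=\mathbf 1_{n\equiv a\ (q)}$. Concretely, I write
\[
\sum_{n}K(n)V(n)=\sum_{b \bmod q}K(b)\sum_{k\in\mathbb{Z}}V(b+kq).
\]
Since $V$ is smooth with compact support, the Poisson summation formula $\sum_k g(k)=\sum_m\widehat g(m)$ applies to $g(t)=V(b+tq)$. A change of variables gives $\widehat g(m)=q^{-1}e(bm/q)\widehat V(m/q)$. Hence
\[
\sum_{n}K(n)V(n)=\frac1q\sum_{m\in\mathbb{Z}}\widehat V\Bigl(\frac mq\Bigr)\sum_{b\bmod q}K(b)e\Bigl(\frac{bm}{q}\Bigr)=\frac{1}{\sqrt q}\sum_{m\in\mathbb{Z}}\widehat K(m)\widehat V\Bigl(\frac mq\Bigr),
\]
using the normalization of $\widehat K$ fixed in the Notation section. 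Interchanging the finite $b$-sum with the $m$-sum is harmless, because $\widehat V$ decays faster than any power, so everything converges absolutely. The sum over $n\geqslant1$ coincides with the sum over all $n\in\mathbb{Z}$ when $V$ is supported in $(0,\infty)$, which is the only case ever used (e.g.\ $V(l/L)$ with $V$ supported in $[1/2,3]$); I will state this as the tacit assumption.

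For the second formula I take $K(n)=\mathbf 1_{n\equiv a\ (q)}$. Then $\widehat K(m)=q^{-1/2}e(am/q)$, and substituting into the first formula gives $\frac1q\sum_m e(am/q)\widehat V(m/q)$.

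The main point needing care is the range of the dual variable. The correct dual sum runs over all $m\in\mathbb{Z}$, including $m=0$ and $m<0$; the frequency $m=0$ produces the main term $\widehat V(0)/q$. So I will read ``$m\geqslant1$'' in the statement as $m\in\mathbb{Z}$ (a typographical slip). Otherwise the identity is false: for instance, for constant $K$ the right-hand side would miss the main term entirely. Beyond this, the only things to check are the sign convention in $e(\pm bm/q)$, which matches the definition of $\widehat K$, and the regularity of $V$ needed to justify Poisson summation. Both are routine.
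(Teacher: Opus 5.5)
Your argument is correct and is the standard derivation: split $n$ into residue classes modulo $q$, apply classical Poisson summation to $t\mapsto V(b+tq)$, and recombine using the normalization of $\widehat{K}$. The paper gives no proof of its own and only recalls the formula, so there is no different route to compare. Your reading of the dual sum as running over all $m\in\mathbb{Z}$, and of the $n$-sum as over $\mathbb{Z}$ (equivalently, $V$ supported in $(0,\infty)$), is the right correction of the statement's typo. It also matches how the paper actually uses the lemma, e.g.\ the sum over $h\in\mathbb{Z}$ in \eqref{after-Poisson}.
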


Then, we recall the $\operatorname{GL}(2)$ Voronoi summation formula in the following lemma. See appendix A.4 of \cite{Vonoroi1} and appendix of \cite{Vonoroi2} for details.
\begin{lemma}\label{Voronoi summation formula}
Let $f$ be a holomorphic cusp form with weight $k$, and let $\lambda_f(n)$ be its Fourier coefficients. For integers $a, q\geqslant1$ with $(a,q)=1$, and $V(x)\in C_c^\infty(0,+\infty)$, we have
\begin{equation*}
\sum_{n\geqslant1}\lambda_f(n)e\Bigl{(}\frac{an}{q}\Bigl{)}V(n)=\frac{1}{q}\sum_{n\geqslant 1}\lambda_f(n)e\left(\frac{\overline{a}n}{q}\right)\check{V}\left(\frac{n}{q^2}\right),
\end{equation*}
where $\check{V}$ is the Bessel transform of weight $k$ given by
\begin{equation}\label{V}
    \check{V}(x)=2\pi i^{k}\int_0^{\infty}V(t)J_{k-1}(4\pi \sqrt{xt})\mathrm{d}t.
\end{equation}
\end{lemma}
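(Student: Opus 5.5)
The plan is to obtain Lemma \ref{Voronoi summation formula} from the modularity of $f$ under a suitable element of $\mathrm{SL}_2(\mathbb{Z})$, following the route in the cited appendices. First, I would reduce to an identity for a single exponential. Using Mellin inversion,
\[
V(t)=\frac{1}{2\pi i}\int_{(\sigma)}\widetilde V(s)t^{-s}\,\mathrm{d}s,
\]
the left-hand side becomes $\frac{1}{2\pi i}\int_{(\sigma)}\widetilde V(s)L(s,f,a/q)\,\mathrm{d}s$, where
\[
L(s,f,a/q)=\sum_{n\geqslant1}\lambda_f(n)e\Bigl(\frac{an}{q}\Bigr)n^{-s}
\]
is the additive twist. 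Since $\widetilde V$ decays rapidly in vertical strips, shifting contours is harmless once $L(s,f,a/q)$ has polynomial growth.

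The core step is the functional equation of the additive twist. I would choose $\gamma=\begin{pmatrix}a&b\\ q&-\bar a\end{pmatrix}$ with $-a\bar a-bq=1$ and apply $f|_k\gamma=f$ at the points $z=a/q+iy/q$. This gives
\[
f\Bigl(\frac aq+\frac{iy}{q}\Bigr)=(iy)^{-k}f\Bigl(-\frac{\bar a}{q}+\frac{i}{qy}\Bigr).
\]
Inserting the Fourier expansion $f(z)=\sum\lambda_f(n)n^{(k-1)/2}e(nz)$ and taking Mellin transforms in $y$ then shows the following. The completed function $\Lambda(s)=(q/2\pi)^s\Gamma(s+\frac{k-1}{2})L(s,f,a/q)$ is entire, because $f$ is cuspidal and so no poles appear. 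It also satisfies $\Lambda(s,a/q)=i^k\Lambda(1-s,-\bar a/q)$, or with $\bar a$ in place of $-\bar a$ after conjugating and using the reality of $\lambda_f$; the correct sign depends on the convention for $e(\bar a n/q)$ and must be tracked. Convexity (Phragmén–Lindelöf) supplies the polynomial growth needed for the contour shift.

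Next, I would move the contour to $\Re s<0$ and apply the functional equation. I would then expand $L(1-s,f,\bar a/q)$ as a Dirichlet series and interchange summation and integration, which is justified by absolute convergence. This yields
\[
\frac1q\sum_n\lambda_f(n)e\Bigl(\frac{\bar a n}{q}\Bigr)\,W\Bigl(\frac{n}{q^2}\Bigr),
\]
where $W(x)$ is a Mellin integral of the form $\frac{1}{2\pi i}\int\widetilde V(s)(2\pi)^{2s}\frac{\Gamma(1-s+\frac{k-1}{2})}{\Gamma(s+\frac{k-1}{2})}x^{s-1}\,\mathrm{d}s$. The final step identifies $W$ with $\check V$ in \eqref{V}. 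For this I would use the Mellin transform of $J_{k-1}(4\pi\sqrt{u})$, which is a ratio of exactly these Gamma factors, together with Parseval.

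The main obstacle I expect is bookkeeping rather than any conceptual issue. It consists of matching the normalization constants ($2\pi i^k$, the factor $1/q$, the argument $n/q^2$) and the sign of $\bar a$ in the Mellin–Bessel identification. I would handle this by checking the final identity against the classical statement in \cite{Vonoroi1}, whose appendix A.4 gives precisely this form.
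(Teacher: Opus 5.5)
Your route is the standard proof behind the appendix the paper cites; the paper itself gives no argument. It runs through modularity under an element of $\mathrm{SL}_2(\mathbb{Z})$ with bottom row $(q,c)$, the functional equation $\Lambda(s,a/q)=i^k\Lambda(1-s,-\bar a/q)$, a contour shift, and the Mellin transform
\[
\int_0^\infty J_{k-1}(4\pi\sqrt{u})\,u^{w-1}\,\mathrm{d}u=(2\pi)^{-2w}\,\frac{\Gamma(w+\frac{k-1}{2})}{\Gamma(\frac{k+1}{2}-w)} .
\]
If you carry the constant $i^k/(2\pi)$ from the functional equation, this gives $W=\check V$ exactly, so $2\pi i^k$, $1/q$ and $n/q^2$ are right.

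\textbf{The gap.} The problem is the step you leave open: that $-\bar a$ may be replaced by $\bar a$ ``after conjugating and using the reality of $\lambda_f$''. This fails. Conjugating $\Lambda(s,a/q)=i^k\Lambda(1-s,-\bar a/q)$ (with $k$ even) gives $\Lambda(s,-a/q)=i^k\Lambda(1-s,\bar a/q)$. That is the same identity with $-a$ in place of $a$, and no manipulation moves the sign on one side only.

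In fact the lemma as printed, with $e(\bar a n/q)$ and $\bar a$ the inverse of $a$, is false for $q\geqslant 3$. Take $V$ and $\lambda_f$ real. Applying the correct formula to $-a$ shows that the printed right-hand side is the complex conjugate of the left-hand side. The printed identity would therefore force $\sum_n\lambda_f(n)\sin(2\pi an/q)V(n)=0$ for every real $V$. A bump $V$ at $n=1$ instead gives $\sin(2\pi a/q)\neq 0$.

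\textbf{What your argument proves.} Done correctly, it gives
\[
\sum_{n\geqslant1}\lambda_f(n)e\Bigl(\frac{an}{q}\Bigr)V(n)=\frac{1}{q}\sum_{n\geqslant1}\lambda_f(n)e\Bigl(-\frac{\bar a n}{q}\Bigr)\check V\Bigl(\frac{n}{q^2}\Bigr),
\]
which is the standard form. Checking against the literature, as you plan, will expose the discrepancy rather than resolve it.

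Your matrix carries the same sign confusion. With $-a\bar a-bq=1$, your $\bar a$ is $-a^{-1}$, and then $\gamma$ gives $f(a/q+iy/q)=(i/y)^k f(\bar a/q+i/(qy))$. Take instead $\begin{pmatrix}a&b\\ q&c\end{pmatrix}$ with $ac-bq=1$ and $z=-c/q+i/(qy)$. This yields your displayed identity with $\bar a=c$ the true inverse.

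The slip is harmless for the paper. Its only use of the lemma, in Lemma~\ref{proposition2}, turns $e(-uml/d)$ into $e(\bar l\bar u m/d)$, which agrees with the corrected formula and not the printed one.
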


\begin{lemma}\label{proposition2}
Let $q$ be a squarefree number and $d\mid q$. Let $V$, $W$ be two smooth functions compactly supported on $[0, +\infty)$. Then we have
\begin{equation*}
    \begin{aligned}
\mathop{\sum\sum}_{\substack{m, l\geqslant1\\ml\equiv a\mods q}}\lambda_f(m)V(m)W(l)=&\frac{1}{q}
\mathop{\sum\sum}_{\substack{m, l\geqslant1\\(ml,q)=1}}\lambda_f(m)V(m)W(l)
\\&+\frac{1}{q}\sum_{d\mid q}\frac{1}{d}\mathop{\sum\sum}_{m,l\geqslant1}\lambda_{f}(m)\check{V}\Bigl{(} \frac{m}{d^{2}}\Bigr{)}\widehat{W}\Bigl{(}\frac{l}{d}\Bigr{)}S_3(m,l,a;d).
\end{aligned}
\end{equation*}
where $\check{V}$ is defined as in \eqref{V}, $\widehat{W}$ denotes the Fourier transform of $W$, and $S_3(m,l,a;d)$ is the $3$th Kloosterman sum. 
\end{lemma}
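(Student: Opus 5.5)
The plan is to detect the congruence $ml\equiv a \pmod q$ with additive characters, and then apply Voronoi summation in $m$ and Poisson summation in $l$ modulus by modulus. First I write
\[
\mathbf 1_{ml\equiv a\,(q)}=\frac1q\sum_{u \bmod q} e\Bigl(\frac{u(ml-a)}{q}\Bigr).
\]
It is more convenient to split by $(ml,q)$ and use multiplicative orthogonality, but the cleanest route is the following. Since $(a,q)=1$, the condition forces $(ml,q)=1$. So I first restrict to $(l,q)=1$ and write $m\equiv a\bar l\pmod q$. Then I detect $m\equiv a\bar l$ by
\[
\frac1q\sum_{b\bmod q}e\bigl(b(m-a\bar l)/q\bigr).
\]
Grouping $b$ according to $d=q/(b,q)$ gives a sum over $d\mid q$ and primitive $b \bmod d$. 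The term $d=1$ gives $\frac1q\sum_{(l,q)=1}\sum_m\lambda_f(m)V(m)W(l)$. This is not quite the stated main term, which also imposes $(m,q)=1$. To get exactly $(ml,q)=1$, I would instead use the Ramanujan-sum version of orthogonality over $(\mathbb Z/q)^\times$, or simply absorb the coprimality into the dual side; that bookkeeping is a secondary point.

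For each $d\mid q$, $d>1$, and each $b\in(\mathbb Z/d)^\times$, I apply Lemma~\ref{Voronoi summation formula} to the $m$-sum:
\[
\sum_m\lambda_f(m)e(bm/d)V(m)=\frac1d\sum_m\lambda_f(m)e(\bar b m/d)\check V(m/d^2).
\]
The remaining $l$-sum is $\sum_l W(l)e(-ab\bar l/d)$, with $(l,d)=1$ handled by periodicity. I apply the Poisson lemma with the $d$-periodic function $l\mapsto e(-ab\bar l/d)\mathbf 1_{(l,d)=1}$. This gives $\frac1d\sum_h \widehat W(h/d)\,S(-ab,h;d)$, an ordinary Kloosterman sum after completing. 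Substituting, the $b$-sum becomes
\[
\sum_{b\,(d)}^{*}e(\bar b m/d)\sum^{*}_{x\,(d)}e\bigl((-ab\bar x+hx)/d\bigr).
\]
Here I substitute $y=\bar b$, $z=-ab\bar x$, so that the product $y\cdot z\cdot x$ is fixed in terms of $a$. The $(y,z,x)$ sum then collapses to the hyper-Kloosterman sum in three variables, $S_3(m,l,a;d)$, with the Poisson dual variable renamed $l$. Collecting the normalisations $\frac1q\cdot\frac1d\cdot\frac1d\cdot d$ from the several sums should give the stated factor $\frac1q\cdot\frac1d$, once $S_3$ is understood as the unnormalised sum.

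The main obstacle I expect is the precise bookkeeping at imprimitive levels. There are three sources of difficulty. First, the $b$-sum over $\mathbb Z/q$ decomposes via $d=q/(b,q)$, and the $d=1$ term must reproduce exactly $\frac1q\sum_{(ml,q)=1}$. Second, Voronoi requires $(b,d)=1$. Third, the coprimality $(l,q)=1$ must be carried through the Poisson step; squarefreeness of $q$ makes the Möbius inversion clean. I would handle this by first writing $\mathbf 1_{(l,q)=1}=\sum_{e\mid (l,q)}\mu(e)$, or equivalently by working with the multiplicative decomposition over the $d\mid q$. I would then check that the cross terms either vanish or are exactly what the definition of $S_3(m,l,a;d)$ encodes, with possibly degenerate arguments (e.g.\ $l\equiv0$) allowed.
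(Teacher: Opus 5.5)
Your route is the paper's: additive detection of the congruence, grouping the frequencies by $d\mid q$ into primitive classes, $\operatorname{GL}(2)$ Voronoi in $m$, Poisson in $l$, and recombining the two complete sums into $S_3$. Solving $m\equiv a\bar l\pmod q$ first is equivalent to the paper's detection of $ml\equiv a$ via the change of variable $b=-ul$, which is legitimate because $(l,q)=1$ is forced. Its only advantage is that the $m$-twist $e(bm/d)$ does not depend on $l$. You also correctly keep only $d>1$ on the dual side, whereas the printed formula sums over every $d\mid q$, including $d=1$.

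The gap is the bookkeeping you postpone. It is not secondary, and neither of your suggested fixes works.
\begin{itemize}
\item Orthogonality of characters of $(\mathbb Z/q\mathbb Z)^\times$ gives the main term $\frac{1}{\varphi(q)}\sum_{(ml,q)=1}$ together with character sums, not $\frac1q$ and Kloosterman sums.
\item Inserting $\mathbf 1_{(m,q)=1}$ before the additive detection does produce $\frac1q\sum_{(ml,q)=1}$. But then the $m$-sums are no longer complete, so Voronoi applies only after M\"obius inversion and Hecke relations, and these create dual terms of a different shape.
\end{itemize}

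In fact the identity as printed is not exact, and the paper's proof does not close this either. It asserts that the $d=1$ term is $\frac1q\sum_{(ml,q)=1}$, and passes silently from $(l,q)=1$ to $(l,d)=1$ before Poisson. It also keeps $d=1$ among the dual terms and restricts the dual variable to $l\geqslant1$. Already for $q=p$, even letting the dual variable run over $\mathbb Z$, undoing Voronoi and Poisson shows that the right-hand side exceeds the left-hand side by
\[
\frac1p\mathop{\sum\sum}_{m,l\geqslant1}\lambda_f(m)V(m)W(l)-\frac1p\mathop{\sum\sum}_{\substack{(l,p)=1\\ p\mid m}}\lambda_f(m)V(m)W(l).
\]
This is not identically zero: the first piece is the duplicated $d=1$ term, and the second comes from the spurious condition $(m,p)=1$.

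What your argument actually proves is the following corrected identity:
\begin{itemize}
\item the main term is $\frac1q\sum_{(l,q)=1}\sum_{m}\lambda_f(m)V(m)W(l)$;
\item dual terms occur only for $1<d\mid q$;
\item the dual frequency $h$ runs over all of $\mathbb Z$, and $h=0$ contributes a term with the Ramanujan sum factor $\mu(d)c_d(m)$;
\item the condition $(l,q/d)=1$ is removed by $\sum_{e\mid(l,q/d)}\mu(e)$, which adds weights $\frac{\mu(e)}{e}\widehat W\bigl(\frac{h}{de}\bigr)$ and replaces $a$ by $a\bar e$ inside the Kloosterman sum.
\end{itemize}

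Finally, your count of normalisations is off. You get $\frac1q\cdot\frac1d\cdot\frac1d$ times a complete sum of $\varphi(d)^2$ terms, with no extra factor $d$. So the printed factor $\frac1{qd}$ needs $S_3$ normalised by $d^{-1}$, not the unnormalised sum. This is what the paper implicitly does when it later replaces $S_3$ by $\Kl_3$.
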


\begin{proof}
We are interested in the asymptotic of 
\begin{equation*}
\mathop{\sum\sum}_{\substack{m,l\geqslant1\\ ml\equiv a \mods q}}\lambda_f(m)V(m)W(l).
\end{equation*}
Detecting $ml\equiv a \mods q$ using additive characters, we obtain
\begin{equation*}
\frac{1}{q}\mathop{\sum\sum}_{m,l\geqslant1}
\sum_{u \mods q}e\left(\frac{u(a-ml)}{q}\right)\lambda_f(m)V(m)W(l).
\end{equation*}
Splitting into Ramanujan sums, we get
\begin{equation}\label{d>1}
\frac{1}{q}\sum_{d\mid q}\mathop{\sum\sum}_{\substack{m,l\geqslant 1\\ (l,q)=1}}\;\sideset{}{^*}\sum_{u \mods d}e\left(\frac{u(a-ml)}{d}\right)\lambda_f(m)V(m)W(l).
\end{equation}
Trivially, the contribution of $d=1$ is
\begin{equation*}
\frac{1}{q}\mathop{\sum\sum}_{\substack{m, l\geqslant1\\(ml,q)=1}}\lambda_f(m)V(m)W(l).
\end{equation*}
When $d>1$, we first consider the $m$-sum.
Applying the Voronoi summation formula to the above sum over $m$ and 
according to Lemma \ref{Voronoi summation formula}, we obtain
\begin{equation*}
\sum_{m\geqslant 1}\lambda_{f}(m)V(m)e\Bigl(-\frac{uml}{d}\Bigr)
=\frac{1}{d}\sum_{m\geqslant 1}\lambda_{f}(m)\check{V}
\Bigl(\frac{m}{d^{2}}\Bigr)e\Bigl(\frac{ \overline{l}\overline{u}m}{d}\Bigr).
\end{equation*}
Therefore, \eqref{d>1} is
\begin{equation}\label{S2/}
    \frac{1}{q}\sum_{d\mid q}\frac{1}{d}\mathop{\sum\sum}_{\substack{m,l\geqslant 1\\ (l,d)=1}}\lambda_{f}(m)\check{V}\Bigl{(}\frac{m}{d^{2}}\Bigr{)}W(l)
    \,\sideset{}{^*}
\sum_{u \mods d}e\Bigl(\frac{au+m \overline{l}\overline{u}}{d}\Bigl)
\end{equation}
Applying the Poisson summation formula to the sum over \(l\), we obtain
\begin{equation*}
\sum_{\substack{l\geqslant1\\(l,d)=1}}W(l)e\Bigl{(}\frac{m \overline{l}\overline{u}}{d}\Bigr)=\frac{1}{d}
\;\sideset{}{^*}\sum_{v \mods d}\sum_{l\geqslant 1}
\widehat{W}\Bigl(\frac{l}{d}\Bigr)e\Bigl(\frac{m\overline{u}\overline{v}+lv}{d}\Bigr).
\end{equation*}
So \eqref{S2/} becomes
\begin{equation*}
\frac{1}{q}\sum_{d\mid q}\frac{1}{d^{2}}\mathop{\sum\sum}_{m,l\geqslant1}\lambda_{f}(m)\check{V}\Bigl{(} \frac{m}{d^{2}}\Bigr{)}\widehat{W}\Bigl{(}\frac{l}{d}\Bigr{)}\sumstar_{u\mods d}e\Bigl{(}{\frac{au}{d}}\Bigl{)}\sumstar_{v\mods d}e\Bigl{(}\frac{m\overline{uv}+lv}{d}\Bigl{)}.
\end{equation*}
Then, we get
\begin{equation*}
\frac{1}{q}\sum_{d\mid q}\frac{1}{d}\mathop{\sum\sum}_{m,l\geqslant1}\lambda_{f}(m)\check{V}\Bigl{(} \frac{m}{d^{2}}\Bigr{)}\widehat{W}\Bigl{(}\frac{l}{d}\Bigr{)}S_3(m,l,a;d).
\end{equation*}

This gives the formula we stated.
\end{proof}

We take $\Delta$ slightly larger than $1$, that is,
\begin{equation*}
   \Delta= 1 + (\log X)^{-B},
\end{equation*}
where \(B \geqslant 1\) is sufficiently large. Given \(X \geqslant 2\). Then we introduce the following lemma.
\begin{lemma}
    For any $\Delta>1$, there exists a sequence $(b_{l,\Delta})_{l \geqslant 0}$ of smooth functions
supported in $[\Delta^{l-1},\Delta^{l+1}]$ such that, for all $t \geqslant 1$ and $v \geqslant 0$,
\[
b^{(v)}_{l,\Delta}(t) \ll_v t^{-v} (\log X)^{Bv}.
\]
\begin{proof}
    See \cite[Lemma 2]{Fouvry}.
\end{proof}
\end{lemma}
We define $V(t)=b_{l,\Delta}(t), W(t)=b_{l,\Delta}(t)$. Thus, their derivatives satisfy\begin{equation}\label{V'}
    V^{(v)}(t), W^{(v)}(t)\ll_v t^{-v}(\log X)^{Bv}.
\end{equation}

The following lemma describes the decay of the Fourier transforms of $W$ and the Bessel transforms of $V$.
\begin{lemma}\label{decay}
Let $V, W$ be as above, $\check{V}$ be defined as in \eqref{V}, and $\widehat{W}$ denote the Fourier transform of $W$. There exists a constant $D \geqslant 0$ such that for any $t > 0$, any $E \geqslant 0$ and any $j \geqslant 0$, we have
\begin{align*}
\check{V}^{(j)}(t) &\ll_{E,j} t^{-j}M(\log X)^{Bj} \Bigl{(}\frac{(\log X)^{Dj}}{1+tM}\Bigl{)}^{E},  \\
\hat{W}^{(j)}(t) &\ll_{E, j} t^{-j}L(\log X)^{Bj} \Bigl{(}\frac{(\log X)^{Dj}}{1+tL}\Bigl{)}^{E}. 
\end{align*}
\begin{proof}
    See \cite[Lemma 5.3]{KMS2017}.
\end{proof}

\end{lemma}
\subsection{Reduction of Theorem \ref{main theorem}}
We will set up the proof of Theorem \ref{main theorem} in a way very similar to the method introduced in \cite{KMS2017}. We define 
\begin{equation*}
E(X;q,a):=\sum_{\substack{n\leqslant X\\n\equiv a\mods{q}}}(\lambda_f*1)(n)-\frac{1}{\varphi(q)}\sum_{\substack{n\leqslant X\\(n,q)=1}} (\lambda_f*1)(n).
\end{equation*}
From \cite{FKM2015}, by a smooth partition of unity on the \(m\) and \(l\) variables, and decompose $E(\lambda_f*1,x;q,a)$ into $O(\log^{2} X)$ terms of the form
\begin{equation}\label{e}
\widetilde{E}(V, W; q, a)=\sum_{ml\equiv a \mods{q}} \lambda_{f}(m) V(m)W(l) - \frac{1}{q} \sum_{(ml, q) = 1} \lambda_{f}(m) V(m)W(l),
\end{equation}
where \(V, W\) satisfies \eqref{V'}.
Applying Lemma \ref{proposition2} to \eqref{e}, we obtain
\begin{equation}\label{E}
\widetilde{E}(V, W;q,a)
=\frac{1}{q}\sum_{d\mid q}\frac{1}{d}\mathop{\sum\sum}_{m,l\geqslant1}\lambda_{f}(m)\check{V}\Bigl{(} \frac{m}{d^{2}}\Bigr{)}\widehat{W}\Bigl{(}\frac{l}{d}\Bigr{)}S_3(m,l,a;d).
\end{equation}
From \cite[Lemma 2.1]{Xi}, we have
\begin{equation*}
S_3(m,l,a;d)=\sum_{s\mid(m,l,d )}\mu(s)S_3(\frac{\overline{s}mla}{s^{2}},1,1;\frac{d}{s}).
\end{equation*}
Then, \eqref{E} becomes
\begin{equation*}
\begin{aligned}
&\frac{1}{q}\sum_{d\mid q}\frac{1}{d}\Bigl{|}\mathop{\sum\sum}_{m,l\geqslant1}\lambda_{f}(m)\check{V}\Bigl{(} \frac{m}{d^{2}}\Bigr{)}\widehat{W}\Bigl{(}\frac{l}{d}\Bigr{)}\sum_{s\mid (m,l,d)}\mu(s)S_3(\frac{\overline{s}mla}{s^{2}},1,1;\frac{d}{s})\Bigl{|}\\
&\ll\frac{1}{q}\sum_{d\mid q}\sum_{s\mid (m,l,d)}\frac{1}{d}\Bigl{|}\mathop{\sum\sum}_{m,l\geqslant1}\lambda_{f}(m)\check{V}\Bigl{(} \frac{m}{d^{2}}\Bigr{)}\widehat{W}\Bigl{(}\frac{l}{d}\Bigr{)}\mathrm{Kl}_3(\frac{\overline{s}mla}{s^{2}};\frac{d}{s})\Bigl{|}
\end{aligned}
\end{equation*}

Hence, it remains to prove that
\begin{equation}\label{简化估计}
    \frac{1}{q}\sum_{d\mid q}\sum_{s\mid (m,l,d)}\frac{1}{d}\Bigl{|}\mathop{\sum\sum}_{m,l\geqslant1}\lambda_{f}(m)\check{V}\Bigl{(} \frac{m}{d^{2}}\Bigr{)}\widehat{W}\Bigl{(}\frac{l}{d}\Bigr{)}\mathrm{Kl}_3(\frac{\overline{s}mla}{s^{2}};\frac{d}{s})\Bigl{|} \ll_{A} \frac{ML}{q} (\log X)^{-A},
\end{equation}
where $X(\log X)^{-C} \leqslant ML\leqslant X$
for some \(C \geqslant 0\) large enough depending on the value of the parameter $A$ in Theorem \ref{main theorem}.
We introduce the dual parameters
\begin{equation}\label{no1}
M^*=\frac{q^{2}}{M}, \quad L^*=\frac{q}{L}.
\end{equation}
By Lemma \ref{decay}, we know that if \(\eta>0\) is sufficiently small, the contribution to the sum \eqref{简化估计} from those pairs \((m,l)\) such that
$m\geqslant X^{\eta/2}M^{*}$ and $l\geqslant X^{\eta/2}L^{*}$ is negligible. Therefore, by Lemma \ref{decay} and a smooth dyadic partition of unity, we reduce the estimation of the sum to the form
\begin{equation*}
S(M^{\prime},L^{\prime})=\sum_{m,l\geqslant 1}\lambda_{f}(m)\mathrm{Kl}_{3}(aml;q)V^{*}(m)W^{*}(l),
\end{equation*}
where
\begin{equation}\label{no2}
    \frac{1}{2}\leqslant M^{\prime}\leqslant X^{\frac{\eta}{2}}M^{*},\quad \frac{1}{2}\leqslant L^{\prime} \leqslant X^{\frac{\eta}{2}}L^{*},
\end{equation}
and \(V^{*},W^{*}\) are smooth compactly supported functions with
\begin{equation*}
\operatorname{supp}(V^{*})\subset[M^{\prime},2M^{\prime}],\quad\operatorname{supp}(W^{*})\subset[L ^{\prime},2L^{\prime}],
\end{equation*}
\begin{equation*}
V^{*(j)}(u),\quad W^{*(j)}(u)\ll u^{-j} (\log X)^{O(j)}
\end{equation*}
for any \(j\geqslant 0\). Precisely, it is enough to prove that
\begin{equation*} S(M^{\prime},L^{\prime})\ll_{A}q(\log X)^{-A}.
\end{equation*}
Since the trivial bound for $S(M^{\prime},L^{\prime})$ is
\begin{equation*}
S(M^{\prime},L^{\prime})\ll M^{\prime}L^{\prime}(\log X).
\end{equation*}

\subsection{Completion of the proof of Theorem \ref{main theorem}}\label{end proof}
First of all, let us introduce some notation. We denote
\begin{equation*}
    X=q^{2-\delta}
\end{equation*}
and
\begin{equation*}
    M=q^{\mu},\quad L=q^{\nu},\quad M^{\prime}=q^{\mu^{\prime }},\quad L^{\prime}=q^{\nu^{\prime}},\quad M^{*}=q^{\mu^{*}},\quad L^{*}=q^{\nu^{*}}.
\end{equation*}
By \eqref{no1} and \eqref{no2}, we have
\begin{equation*}
    \mu^{*}=2-\mu,\quad\nu^{*}=1-\nu,\quad\mu^{\prime}\leqslant\mu^{*}+\frac{\eta}{2}, \quad\nu^{\prime}\leqslant\nu^{*}+\frac{\eta}{2}.
\end{equation*}
In addition,
\begin{equation*}
    \mu+\nu=2-\delta+o(1), 
\end{equation*}
so we get
\begin{equation*}
\mu^{\prime}+\nu^{\prime}\ll1+\delta+\eta+o(1).
\end{equation*}
Let
\begin{equation*}
S(M^{\prime},L^{\prime})=q^{\sigma(\mu^{\prime},\nu^{\prime})}.
\end{equation*}
Then Corollary \ref{main corollary}, Lemma \ref{exponent pairs} and Lemma \ref{1_L} yield the estimates
\begin{equation*}
\sigma(\mu^{\prime},\nu^{\prime})\leqslant\tau(\mu^{\prime},\nu^{\prime})+o(1),
\end{equation*}
where
\begin{align}
\tau(\mu^{\prime},\nu^{\prime}) &\leqslant \mu^{\prime}+\nu^{\prime}+ \max\Bigl{(}\frac{1}{6}-\frac{\mu^{\prime}}{2}, -\frac{1}{6}, \frac{1}{6}-\frac{\nu^{\prime}}{2}\Bigl{)},\label{bound1}\\
\tau(\mu^{\prime},\nu^{\prime}) &\leqslant \mu^{\prime}+\nu^{\prime}+ \max\Bigl{(}\frac{1}{4}-\frac{\mu^{\prime}}{2}, -\frac{1}{4}, -\frac{\nu^{\prime}}{2}\Bigl{)}, \label{bound2}\\
\tau(\mu^{\prime},\nu^{\prime}) &\leqslant \mu^{\prime}+\nu^{\prime}+ \max\Bigl{(}-\frac{1}{4},\frac{3}{8}-\frac{\mu^{\prime}}{2},\frac{3}{4}-\mu^{\prime}\Bigl{)}.\label{bound3}
\end{align}

We will prove that if $\delta<\frac{1}{18}$ and $\eta$ is small enough, then we have 
\begin{equation*}
\sigma(\mu^{\prime},\nu^{\prime})\leqslant 1-\kappa,
\end{equation*}
where $\kappa>0$, sufficiently small, depends only on $\delta$ and $\eta$. This implies the desired estimate.

First, since
\begin{equation*}
 \mu^{\prime}+\nu^{\prime}-\frac{1}{4}\leqslant \mu^{\prime}+\nu^{\prime}-\frac{1}{6}\leqslant 1+\delta-\frac{1}{6}+o(1)<1, 
\end{equation*}
we may replace \eqref{bound1}, \eqref{bound2} and \eqref{bound3} by
\begin{align}
\tau(\mu^{\prime},\nu^{\prime}) &\leqslant \mu^{\prime}+\nu^{\prime}+ \max\Bigl{(}\frac{1}{6}-\frac{\mu^{\prime}}{2}, \frac{1}{6}-\frac{\nu^{\prime}}{2}\Bigl{)},\label{boundd1}\\
\tau(\mu^{\prime},\nu^{\prime}) &\leqslant \mu^{\prime}+\nu^{\prime}+ \max\Bigl{(}  -\frac{\nu^{\prime}}{2}, \frac{1}{4}-\frac{\mu^{\prime}}{2}\Bigl{)},\label{boundd2}\\
\tau(\mu^{\prime},\nu^{\prime}) &\leqslant \mu^{\prime}+\nu^{\prime}+ \max\Bigl{(}\frac{3}{8}-\frac{\mu^{\prime}}{2},\frac{3}{4}-\mu^{\prime}\Bigl{)} \label{boundd3}
\end{align}
We now distinguish three cases according to the relative sizes of $\mu^{\prime}$ and $\nu^{\prime}$:
\begin{itemize}
    \item if $\mu^{\prime}\leqslant\frac{1}{4}-\kappa$, we replace \eqref{boundd3} by
    \begin{equation}
    \tau(\mu^{\prime},\nu^{\prime}) \leqslant \frac{\mu^{\prime}+\nu^{\prime}}{2}+\frac{\nu^{\prime}}{2}+\frac{3}{8}\label{bounddd1};
    \end{equation}
    \item if $\mu^{\prime}>\frac{1}{2}+2(\delta+\kappa)$ and $\nu^{\prime}>2(\delta+\kappa)$, then we obtain the desired bound from \eqref{boundd2};
    \item if $\nu^{\prime}\leqslant2(\delta+\kappa)$, then, using \eqref{bounddd1} and choosing $\kappa$ sufficiently small, we also obtain the desired bound, since in that case
\begin{align*}
\tau(\mu^{\prime},\nu^{\prime}) \leqslant\frac{1+\delta+o(1)}{2}+\delta+\kappa+\frac{3}{8}\leqslant1-\frac{3}{2}\Bigl{(}\frac{1}{12}-\delta\Bigl{)}+\kappa+o(1);
\end{align*}
\item finally, if $\frac{1}{4}-\kappa<\mu^{\prime}\leqslant\frac{1}{2}+2(\delta+\kappa)$, using \eqref{boundd1}, similarly we have
\begin{equation*}
\tau(\mu^{\prime},\nu^{\prime}) \leqslant 1-\frac{3}{2}\Bigl{(}\frac{1}{18}-\delta\Bigl{)}+\kappa+o(1).
\end{equation*}
\end{itemize}

This completes the proof of Theorem \ref{main theorem} for any sufficiently small $\eta$ satisfying
\begin{equation*}
    X^{\frac{1}{2}-\eta}\leqslant q\leqslant X^{\frac{18}{35}-\eta},
\end{equation*}
we have 
\begin{equation*}
E(X;q,a)\ll_{\varepsilon, A}\frac{X}{q}(\log X)^{-A}.
\end{equation*}
	
\section*{Acknowledgements}
The authors thank Professor Yongxiao Lin and Professor Ping Xi for their helpful comments and suggestions.

\bibliographystyle{plain}
\bibliography{distribution/references}  % 你的 .bib 文件名
\end{document}